\newtheorem{theorem}{Theorem}[section]
\newtheorem{lemma}[theorem]{Lemma}
\newtheorem{propo}[theorem]{Proposition}
\newtheorem{claim}[theorem]{Claim}
\newtheorem{corol}[theorem]{Corollary}
\theoremstyle{definition} 
\newtheorem{defin}[theorem]{Definition}
\newtheorem{remark}[theorem]{Remark}
  \newcommand{\ep}{\varepsilon}
\DeclareMathOperator{\last}{end}
\renewcommand{\path}{\mathfrak{A}}
 \newcommand{\GG}{\mathcal{G}}
 \newcommand{\ie}{i.e.\ }
\title{Invariant random perfect matchings in Cayley graphs}
\author[1]{Endre Cs\'oka}
\affil[1]{\small Department of Computer Science\\
\small Eotvos University\\
\small Budapest, Hungary\\
\small\tt csoka@cs.elte.hu}
\author[2]{Gabor Lippner}
\affil[2]{\small Department of Mathematics\\
\small Harvard University\\
\small Cambridge, MA, USA\\
\small \tt lippner@math.harvard.edu}
\begin{document}
\maketitle

\begin{abstract}
We prove that any non-amenable Cayley graph admits a factor of IID perfect matching. We also show that any connected $d$-regular vertex transitive graph admits a perfect matching. The two results together imply that every Cayley graph admits an invariant random perfect matching.

A key step in the proof is a result on graphings that also applies to finite graphs. The finite version says that for any partial matching of a finite regular graph that is a good expander, one can always find an augmenting path whose length is poly-logarithmic in one over the ratio of unmatched vertices. 
\end{abstract}

\section{Introduction}
Let $\Gamma$ be a finitely generated group, and $G$ a locally finite Cayley graph of $\Gamma$. An invariant random subgraph on $G$ is a probability distribution on the set of subgraphs of $G$ that is invariant under the natural action of $\Gamma$ on $G$. 

A factor of IID is a particular way of defining an invariant random subgraph. We only sketch the definition here. First each vertex gets a random number in $[0,1]$,  independently and uniformly. Then each vertex makes a deterministic decision on how the subgraph looks like in its neighborhood, based on what it sees from itself as center. Since each vertex uses the same rule, the distribution of the resulting subgraph is automatically invariant under the action of $\Gamma$. 

Instead of subgraphs, one can also define vertex colorings, or more general structures on $G$. The general name for such a random process is a factor of IID process. An important feature is that such a process can automatically be modeled on any good finite model of $G$. For instance, any factor of IID process on a regular tree can be modeled (with small error) on finite regular graphs with large girth. 

Invariant random processes, and in particular factor of IIDs on Cayley graphs have received considerable attention recently. Standard percolations are trivially factor of IID processes, as well as the free and the wired minimal spanning forests. Another example is the recent solution of the measurable von Neumann's problem by Gaboriau and Lyons (see~\cite{GL}). They show that every non-amenable Cayley graph admits a factor of iid $4$-regular tree. 

It is a long standing open problem to determine the maximum density $i(G)$ of a factor of IID independent subset of a regular tree (mentioned e.g. on the webpage of David Aldous  \footnote{\url{http://www.stat.berkeley.edu/~aldous/Research/OP/inv-tree.html}}). The exact value is unknown, though it is known to be less than $0.46$. Note that trees are bipartite and thus have independent sets of density $1/2$, but the resulting process can not be a factor of IID. The related open question is to determine the limit of the ratio $i(G(n,d))$ of the largest independent subset in $n$ vertex $d$-regular random finite graphs, as $n$ goes to infinity. Bayati, Gamarnik, and Tetali in~\cite{BGT} have recently shown that the limit exists, and the above mentioned modeling phenomenon shows that its value is at least $i(\mathcal{T}_d)$ where $\mathcal{T}_d$ is the $d$-regular infinite tree. A conjecture of Balazs Szegedy (see Conjecture 7.13 in~\cite{HLSz}) claims that this limit is in fact equal to $i(\mathcal{T}_d)$.

In this paper we settle the analogous question for the maximum density of independent edge sets in non-amenable Cayley graphs. An independent edge set in a graph is usually referred to as a matching. An obvious upper bound on the density of a matching is that of the perfect matching, \ie where every vertex is covered by an edge. We show that in our case one can actually achieve the maximum possible density, that is, one can construct a perfect matching as a factor of IID. 

\begin{theorem}\label{theorem:factorofiid}
Let $\Gamma$ be a finitely generated non-amenable group with finite symmetric generating set $S$. Let $G = Cay(\Gamma,S)$ denote the associated Cayley graph. Then there is a factor of IID on $G$ that is almost surely a perfect matching. 
\end{theorem}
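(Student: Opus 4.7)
I would construct the perfect matching as the almost-sure limit of a sequence of factor of IID matchings $M_0,M_1,M_2,\dots$ in which the density $p_i$ of unmatched vertices decays geometrically. The starting point $M_0$ can be any maximal factor of IID matching built greedily from edge labels derived from the IID: include an edge in $M_0$ exactly when it has the minimum label among the edges incident to its two endpoints that are not already blocked. This is a perfectly good factor of IID matching, and it leaves some fraction $p_0<1$ of the vertices unmatched. The plan is then to show that for every $i$ there is a factor of IID map producing $M_{i+1}$ from $M_i$ (and a fresh coordinate of the IID) with $p_{i+1}\le \tfrac12 p_i$, by flipping a disjoint family of augmenting paths with respect to $M_i$.

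\textbf{Augmenting paths via non-amenability.} The key step is to use the expansion lemma advertised in the abstract. A non-amenable Cayley graph has a positive Cheeger constant, and this expansion is precisely what the lemma requires in order to guarantee that the alternating breadth-first search tree rooted at any unmatched vertex grows sufficiently fast to meet another unmatched vertex within depth $L_i=C\cdot\mathrm{polylog}(1/p_i)$. This yields, around every unmatched vertex, an augmenting path of length at most $L_i$. To turn mere existence into an actual modification of $M_i$, I would (i) use a fresh coordinate of the IID to totally order the locally available augmenting paths, and (ii) flip a maximal disjoint subfamily greedily according to that order. A packing-type count, leaning again on expansion, should show that a positive fraction of the unmatched vertices is successfully paired off, delivering the desired geometric improvement $p_{i+1}\le \tfrac12 p_i$.

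\textbf{Main obstacle and passage to the limit.} The delicate point is the passage from the sequence $(M_i)$ to a well-defined factor of IID perfect matching $M_\infty$. Although each $M_i$ is a factor of IID, the radius of dependence at stage $i$ grows with $L_i$, so pointwise convergence has to be argued carefully. I would fix a vertex $v$ and show that its matching edge stabilises after finitely many rounds almost surely: by the geometric decay of $p_i$ and the polylogarithmic length of the augmenting paths, the probability that $v$ lies on some flipped augmenting path at stage $j\ge i$ is summable in $j$, so a Borel--Cantelli argument identifies the stabilisation time. The $M_\infty$ so obtained is then a measurable, $\Gamma$-equivariant function of the IID, hence a factor of IID, and is almost surely a perfect matching. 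The point where I expect the real technical difficulty to lie is the expansion-based augmenting path lemma itself in a quantitative form sharp enough to give the uniform halving of $p_i$; the iterative packaging, Borel--Cantelli convergence, and handling of the countably many fresh IID coordinates are, by comparison, routine.
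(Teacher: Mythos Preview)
Your overall strategy---iterate by flipping short augmenting paths, control how often each edge changes status, and pass to a limit via Borel--Cantelli---is exactly the paper's approach. However, two pieces of your packaging are genuinely off.

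\textbf{The halving step.} Your claim that one round of greedily selecting a maximal vertex-disjoint family of augmenting paths of length $\le L_i$ matches a \emph{constant} fraction of the unmatched vertices is not justified, and I do not see how a ``packing-type count'' proves it: a selected path has $O(L_i)$ vertices, each of degree $d$, and could in principle intersect (and hence block) the chosen augmenting path of $\Theta(dL_i)$ other unmatched vertices, so a single round might only reduce $p_i$ by a factor $1-1/(dL_i)$. The paper sidesteps this entirely. Using the Elek--Lippner construction, $M_k$ is obtained from $M_{k-1}$ by \emph{repeatedly} flipping augmenting paths of length $\le 2k+1$ until none remain. The decay of the unmatched set $U_k$ then comes from the short-augmenting-path theorem in the contrapositive: since $M_k$ admits no augmenting path of length $\le 2k+1$, we must have $2k+1\le c\log^3(1/|U_k|)$, i.e.\ $|U_k|\le\exp\bigl(-((2k+1)/c)^{1/3}\bigr)$. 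No packing argument is needed, and the Borel--Cantelli input is simply $\sum_k(2k+3)|U_k|<\infty$, since each vertex of $U_k$ serves as an endpoint at most once during the passage $M_k\to M_{k+1}$.

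\textbf{The admissibility hypothesis.} The short-augmenting-path theorem (both in the finite and the graphing form) requires not just expansion but that every odd set of size $\ge 3$ has more than $d$ boundary edges. This can fail in a Cayley graph---precisely when every vertex lies in a unique $d$-clique---and your proposal makes no provision for it. The paper treats this case separately at the outset: such graphs carry an automorphism-invariant perfect matching (each vertex picks the unique edge leaving its $d$-clique), which is trivially a factor of IID, and only then assumes the odd-cut condition holds and proceeds to the graphing argument.
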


This extends the result of Lyons and Nazarov~\cite{LN} who proved the same statement for bipartite non-amenable Cayley graphs. 

In particular, every non-amenable Cayley graph admits an invariant random perfect matching, which was also not known. Jointly with Ab\'ert and Terpai, the authors showed the following theorem. 

\begin{theorem}\label{theorem:existsmatching} Every infinite vertex transitive graph $G$ has a perfect matching.
\end{theorem}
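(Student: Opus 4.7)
The plan is to combine a compactness argument with Watkins' edge-connectivity theorem for vertex transitive graphs, and to use the Tutte-Berge formula on large finite truncations to produce the required matchings.

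First, I would assume without loss of generality that $G$ is connected and locally finite, hence $d$-regular for some finite $d \geq 1$. The set $\{0,1\}^{E(G)}$ with the product topology is compact, and both the set of matchings of $G$ and, for each finite $F \subset V(G)$, the set $\mathcal{M}_F$ of matchings covering every vertex of $F$, are closed. A perfect matching lies in $\bigcap_F \mathcal{M}_F$, which is non-empty by the finite intersection property provided each $\mathcal{M}_F$ is non-empty. Hence it suffices to prove that for every finite $F$ there is a matching of $G$ covering $F$.

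Next I would verify Tutte's condition in $G$ using Watkins' theorem, which asserts that a connected locally finite vertex transitive graph of degree $d$ has edge-connectivity $d$. In particular, any finite non-empty proper subset $C \subset V(G)$ satisfies $|\partial_E C| \geq d$. Applied to each finite component of $G - S$ for a given finite $S$, this bound gives at least $d$ edges per component into $S$; since the total number of edges leaving $S$ is at most $d|S|$, the number of finite components of $G - S$, and in particular the number of finite odd components, is at most $|S|$.

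To produce a matching covering a given finite $F$, I would take a large ball $B = B_r(F)$ and form a finite graph $H$ from $G[B]$ by adjoining a single auxiliary sink vertex $w$ joined to every vertex of $B$ with a neighbor outside $B$, and then apply the finite Tutte-Berge formula to $H$. If no matching of $H$ covered $F$, a Berge witness $S^* \subset V(H)$ would exist; lifting $S^*$ back to $G$, together with at most one or two boundary vertices in place of $w$, would produce a finite set $S \subset V(G)$ with more than $|S|$ finite odd components in $G - S$, contradicting the Watkins-based bound of the previous paragraph. Hence $F$ is covered by some matching of $H$, giving the matching of $G$ covering $F$ we sought.

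The main obstacle is this last step: the lift from a Berge witness in $H$ to a counterexample to the Tutte condition in $G$ requires careful bookkeeping, since components of $G[B] - S^*$ may be artifacts of the truncation that would merge with infinite components in $G$, and the sink vertex $w$ may participate in the Berge witness in subtle ways. Choosing $r$ large enough relative to $|F|$, and arguing that only components touching $F$ are relevant to the defect, should resolve these difficulties.
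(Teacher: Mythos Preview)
Your overall strategy---compactness reducing to ``every finite $F$ is coverable'', the edge-connectivity bound (your Watkins citation is the paper's lemma that the minimum cut has size $d$) giving at most $|S|$ finite components of $G \setminus S$, and a Tutte argument on a finite auxiliary graph---is exactly the paper's.

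The gap you flag in the last step is real, and the paper's choice of auxiliary graph is precisely what dissolves it. Instead of a large ball with a sink vertex, take $V' = X \cup \partial X \cup M$, where $M$ is a nonempty set of fresh vertices making $|V'|$ even; keep all $G$-edges inside $X \cup \partial X$ and make $\partial X \cup M$ a clique. Then ask for a \emph{perfect} matching of $G'$ and apply Tutte's theorem directly---note that Tutte--Berge certifies the deficiency of a maximum matching, not that a prescribed set $F$ is uncoverable, so you should be aiming for a perfect matching in any case. If $G'$ has none, parity gives $Y$ with at least $|Y|+2$ odd components; since removing a vertex of $M$ from $Y$ affects only the single clique-component, one may assume $Y \cap M = \emptyset$. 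Every component other than the one containing $M$ then lies inside $X$ and is already a finite component of $G \setminus Y$, yielding at least $|Y|+1$ of them---contradiction. Your single-sink construction breaks exactly when $w \in S^*$: removing $w$ from $S^*$ can merge many boundary-touching components at once, so you cannot reduce to the case $w \notin S^*$; and with $w \in S^*$, components of $G[B] \setminus (S^* \setminus \{w\})$ meeting $\partial B$ need not be components of $G$ minus that set. Taking $r$ large does not help, since $S^*$ itself may reach the boundary.
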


Ab\'ert and Terpai kindly suggested to include the result in this paper. Now, following an observation of Conley, Kechris, and Tucker-Drob (\cite{CKT}) this implies that every amenable Cayley graph admits an invariant random perfect matching. Thus, together with Theorem~\ref{theorem:factorofiid} we get the following. 

\begin{corol} Every Cayley graph admits an invariant random perfect matching.
\end{corol}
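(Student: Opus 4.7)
The plan is to split the corollary into the non-amenable and the amenable cases, and to invoke one of the two stated theorems in each.

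In the non-amenable case, the proof will be immediate from Theorem~\ref{theorem:factorofiid}: for $G = Cay(\Gamma,S)$ with $\Gamma$ non-amenable, that theorem produces a factor of IID whose output is almost surely a perfect matching. Since every factor of IID on a Cayley graph is $\Gamma$-invariant in distribution (a uniform $[0,1]$ labeling is $\Gamma$-invariant, and the decision rule is the same equivariant function applied at each vertex), the push-forward measure on edge subsets is itself an invariant random perfect matching, and nothing more is needed.

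For the amenable case, the plan is to combine Theorem~\ref{theorem:existsmatching} with the averaging trick of Conley, Kechris, and Tucker-Drob. I will equip $\{0,1\}^{E(G)}$ with the product topology (compact by Tychonoff) and with the natural continuous $\Gamma$-action. The subset
\[
PM(G) = \{M \subseteq E(G) : M \text{ is a perfect matching of } G\}
\]
is cut out by the local condition that each vertex be incident to exactly one edge of $M$, and is therefore a closed $\Gamma$-invariant subset; by Theorem~\ref{theorem:existsmatching} it is nonempty. The set $\mathcal{P}(PM(G))$ of Borel probability measures on $PM(G)$ is then a nonempty compact convex subset of $C(PM(G))^{*}$ in the weak-$*$ topology, and $\Gamma$ acts on it continuously and affinely by push-forward.

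The final step will invoke the fixed-point characterization of amenability: any continuous affine action of an amenable group on a nonempty compact convex subset of a locally convex topological vector space has a fixed point. Applied to the action $\Gamma \curvearrowright \mathcal{P}(PM(G))$ this yields a $\Gamma$-invariant probability measure supported on $PM(G)$, which is exactly an invariant random perfect matching of $G$. There is no serious obstacle at the level of the corollary itself; all the real content is packaged into Theorems~\ref{theorem:factorofiid} and~\ref{theorem:existsmatching}. The only point worth watching is that the amenable half only produces an invariant measure and not a factor of IID, a distinction consistent with the fact that Theorem~\ref{theorem:factorofiid} is stated only in the non-amenable setting.
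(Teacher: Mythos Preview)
Your proposal is correct and follows essentially the same route as the paper: split into non-amenable and amenable cases, invoke Theorem~\ref{theorem:factorofiid} for the former, and combine Theorem~\ref{theorem:existsmatching} with the Conley--Kechris--Tucker-Drob observation for the latter. The paper simply cites Proposition~7.5 of~\cite{CKT} for the amenable half, whereas you spell out the underlying fixed-point argument, but the content is the same.
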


The basic strategy of the proof of Theorem~\ref{theorem:factorofiid} is similar to what Lyons and Nazarov use to prove the bipartite case, and what has been used by Elek and Lippner \cite{EL} to construct almost-maximal matchings. We define a sequence of partial matchings, each of which is obtained from the previous one by flipping a sequence of augmenting paths. To show that this sequence "converges" to a limit perfect matching, one has to show that edges do not change roles too often. The crucial step is to bound the length of the shortest augmenting path in terms of the ratio of unmatched vertices. 

Our main contribution is establishing this bound for general graphs. When applying the result on finite graphs, we get the following theorem, that is of independent interest in computer science. 

\begin{theorem}\label{theorem:finite_shortalternating} 
For any $c_0 > 0$ and $d \geq$ 3 integer, there is a constant $c =
c(c_0,d)$ that satisfies the following statement. If a partial matching in a
$c_0$-expander $d$-regular graph leaves at least $\ep$ ratio of all vertices unmatched, there is an augmenting path of length at most $c \log^3(1/\ep)$, or there is a set of vertices $H \subset G$ such that $|H| \geq 3$, $|H|$ is odd, and the number of edges leaving $H$ is at most $d$.
\end{theorem}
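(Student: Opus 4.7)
The plan is to run an Edmonds-style alternating BFS from all unmatched vertices simultaneously, contracting blossoms as they appear, and to use the expansion hypothesis to bound the depth at which either an augmenting path or the odd-set obstruction must surface. Let $U_0$ be the set of unmatched vertices, so $|U_0| \geq \varepsilon n$. I would grow an alternating forest $F$ rooted at $U_0$, classifying vertices as \emph{outer} (at even alternating distance) or \emph{inner} (at odd alternating distance). While processing an outer vertex $v$, each of its $d$ neighbors $w$ is either (i) unmatched or outer in a different tree, yielding an augmenting path; (ii) outer in the same tree, producing a blossom which we contract; (iii) already inner, in which case we skip; or (iv) a new vertex whose matched partner then enters $F$ as a new outer vertex two levels deeper.

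The heart of the argument is a geometric growth estimate for the outer set. Let $O_k$ denote the outer vertices up to level $k$ in the contracted graph. Edge expansion of $G$ provides at least $c_0 |O_k|$ edges leaving $O_k$ in the underlying graph; since the inner set has size at most $|O_k|$, and since cases (i) and (ii) would already resolve the algorithm, a constant fraction of boundary edges must fall into case (iv). This yields
$$|O_{k+2}| \geq (1+\delta)|O_k|$$
for some $\delta=\delta(c_0,d)>0$. Iterating from $|O_0| \geq \varepsilon n$, after $O(\log(1/\varepsilon))$ pairs of levels $|O_k|$ exceeds $n/2$, at which point a cross-tree collision must occur, producing a short augmenting path. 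This gives the naive bipartite-style bound, paralleling the approach of Lyons--Nazarov~\cite{LN}.

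The main obstacle, and the origin of the extra polylogarithmic factor, is the disruption of the growth argument by large blossoms: cascading contractions may absorb a large portion of $O_k$ into a single super-vertex and degrade the effective expansion in the contracted graph. I would handle this by distinguishing blossoms via their external boundary. A blossom $B\subset V(G)$ is factor-critical, hence $|B|$ is automatically odd; if $|B|\geq 3$ and $B$ has at most $d$ edges to its complement in $G$, then $H=B$ is the required obstruction and we return it. Otherwise every blossom formed retains substantial external boundary, and one can refine the growth inequality to show that blossom-driven slowdown costs only polylogarithmic factors. A careful two-level potential argument balancing direct expansion against amortized blossom merges should yield depth $O(\log^3(1/\varepsilon))$, and quantifying this interaction is the hardest step. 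Running the BFS up to depth $c\log^3(1/\varepsilon)$ for $c=c(c_0,d)$ sufficiently large, either the outer set saturates and yields a short augmenting path, or an odd blossom with boundary at most $d$ has been isolated, completing the dichotomy.
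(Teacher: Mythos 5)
Your blossom-contraction framework is a genuinely different route from the paper, which avoids contraction entirely and instead works with raw alternating paths, partitioning the reachable set $X_n$ into tails $T_n$, heads $H_n$, and vertices reachable by both parities $B_n$, and then tracking a class of ``tough'' tail vertices together with pairwise-disjoint ``family'' sets inside $B_n$ that play a role loosely analogous to your blossoms. Your exit for the odd-set obstruction is correct and matches the paper's admissibility hypothesis: a blossom is an odd set of size at least~$3$, and if it has at most $d$ outgoing edges you may return it as $H$; otherwise every blossom has at least $d+1$ external edges, which is precisely the condition the paper imposes.

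The difficulty is that everything after that observation is asserted rather than proved, and the assertion is exactly where all the work lies. You write that ``a careful two-level potential argument balancing direct expansion against amortized blossom merges should yield depth $O(\log^3(1/\varepsilon))$'' and concede this is the hardest step. But that step \emph{is} the theorem: it occupies Sections~4.3--4.6 of the paper and produces the $\log^3$ exponent in a very specific way. The paper runs two phases. In the first phase ($|X_n|$ below half the graph) an invariant $I(n) = |X_n| + |B_n| + \tfrac12\int e_n$ grows by a constant factor each step because families of tough vertices larger than a fixed constant $c_1(c_0,d)$ are disjoint and cannot proliferate; this gives $O(\log(1/\varepsilon))$ steps. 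In the second phase ($|X_n|$ past half) this fails because the critical family size must scale like $1/\varepsilon$, so the same crude bound would cost $(1/\varepsilon)^2$ rather than $\log^2$. The paper repairs this by proving that a family itself satisfies the hypotheses of the first-phase growth theorem (viewed inside the infinite component, with a bounded set of forbidden edges), hence grows exponentially, so any vertex can be tough for at most $O(\log^2(1/\varepsilon))$ active moments; then a second invariant $J(n)$ gains a constant factor over blocks of $K = O(\log^2(1/\varepsilon))$ steps, and Lemma~4.30 closes in $O(\log(1/\varepsilon))$ blocks, giving $\log^3$ overall. Nothing in your sketch identifies this block structure, nor the self-referential use of the growth theorem to bound family growth, nor why the exponent is $3$ rather than $2$ or $4$. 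Your naive geometric estimate $|O_{k+2}| \geq (1+\delta)|O_k|$ would in fact give $O(\log(1/\varepsilon))$ if it held, so the entire content of the theorem is that it does \emph{not} hold unconditionally and one must quantify the slowdown; that quantification is missing.

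There are also two secondary issues you should be aware of if you pursue the blossom route. First, after contraction the graph is no longer $d$-regular and the $c_0$-expansion hypothesis refers to the original graph, so you would need an explicit argument that contraction does not destroy the edge-expansion you rely on for the growth step. Second, an augmenting path found in the contracted graph unwinds to a longer path in the original graph when blossoms are re-expanded, and a nested chain of blossoms could in principle inflate the length by more than a constant factor; the paper's formulation in terms of genuine alternating paths (with the descendent/family bookkeeping) sidesteps this entirely, and also extends verbatim to graphings, where blossom contraction has no clean measurable analogue.
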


\begin{remark} The theorem remains true even if there are only two unmatched vertices. This may be surprising at first, but in fact the condition that any odd set $H$ has at least $d$ edges leaving it easily implies the conditions of Tutte's theorem, so such graphs always have perfect matchings.  
\end{remark}

In the bipartite case, such a bound has actually already been observed in~\cite{JV} by Jerrum and Vazirani, who used it to give a sub-exponential approximation scheme for the permanent. They remark in the same paper that a similar bound for general graphs would be desirable, as it would lead to an approximation scheme for the number of perfect matchings for arbitrary graphs. In a subsequent paper we shall work out the details of this application, together with a generalization of Theorem~\ref{theorem:finite_shortalternating} to non-regular graphs.

The outline of the paper is as follows. In Section~\ref{section:existence} we show the existence of perfect matchings in vertex transitive graphs. In Section~\ref{section:factorofiid} we prove that in a non-amenable Cayley graph there is a factor of IID that is a perfect matching, modulo a variant of Theorem~\ref{theorem:finite_shortalternating}, whose proof we postpone to Section~\ref{section:shortaugmenting}. 
\paragraph{Acknowledgements.} We are indebted to Mikl\'os Ab\'ert for introducing the problem to us and for his constant encouragement. We would also like to thank him and Tam\'as Terpai for valuable discussions and their kind permission to include the proof of Theorem~\ref{theorem:existsmatching} in this paper.

Both authors' research is partially supported by the MTA Renyi "Lendulet" Groups and
Graphs Research Group. Gabor Lippner is further supported by AFOSR grant FA9550-09-1-0090-DOD-35-CAP.

%%%%%%%%%%%%%%%%%%%%%%%%

\subsection{Notation and definitions}

Let $G$ be a simple graph, either finite or infinite. The vertex and edge set of $G$ will be denoted by $V(G)$ and $E(G)$ respectively.

\begin{defin} A \textit{matching} in $G$ is a subset $M \subset E(G)$ such that any vertex $x$ is adjacent to at most one edge $e \in M$. We will denote by $V(M)$ the set of vertices that are matched, \ie that are adjacent to an edge in $M$. 
A matching is \textit{perfect} if $V(M) = V(G)$.
\end{defin}

\begin{defin} Given a graph $G$ with a matching $M$, an \textit{alternating path} is a path $x_0 x_1 \dots x_k$ in $G$ such that every second edge belongs to $M$. An alternating path is called an \textit{augmenting} path if its first and last vertices are not matched. 

If $x,y \in V(G)$ are unmatched vertices and $p$ is an augmenting path connecting $x$ and $y$, then we can define a new matching $M' = M(p) = M \circ E(p)$ as the symmetric difference of the old matching $M$ and the set of edges of $p$. The new matching will then satisfy $V(M') = V(M) \cup \{x,y\}$.
\end{defin}

Let $(X,\mu)$ be a standard Borel probability measure space with a non-atomic probability measure $\mu$. 

\begin{defin}\label{def:graphing} A \textit{graphing} on $X$ is a graph $\GG$ such that $V(\GG) = X$, and where $\GG(E) \subset X \times X$ is a symmetric measurable subset, such that if $A, B \subset X$ are measurable subsets and $f : A \to B$ a measurable bijection whose graph $\{(x,f(x)) : x\in A\}$ is a subset of $E(\GG)$, then $\mu(A) = \mu(B)$.

There is a natural way to measure the size of edge sets in a graphing. If an edge set is given by a measurable bijection $f  : A \to B$ as before, then the size of this edge set is defined to be $\mu(A)$. This extends to a measure on all measurable edge sets. In particular this implies that if $\mathcal{H}$ is a sub graphing of $\GG$ then the size of the edge set of $\mathcal{H}$ can be computed by the formula
\begin{equation}\label{formula:measure} |E(\mathcal{H})| = \frac{1}{2}\int_{X} \deg_{\mathcal{H}}(x) d\mu(x).\end{equation}

A measurable matching (or matching for short) in $\GG$ is a measurable subset $M \subset E(\GG)$ such that every vertex is adjacent to at most one edge in $M$. A matching is \textit{almost everywhere perfect} if $\mu(V(\GG) \setminus V(M)) = 0$. 
In this paper we will only be interested in almost everywhere perfect matchings, and will refer to them as perfect matchings for short.

A graphing $\GG$ is a $c_0$-expander if for every measurable set $H \subset V(\GG)$ we have $|E(H, V(\GG) \setminus H)| \geq c_0 |H| |V(\GG) \setminus H|$, where $E(A,B)$ denotes the set of edges having one endpoint in $A$ and one endpoint in $B$. 
\end{defin}

Let $\Gamma$ be a finitely generated group, and $S \subset \Gamma$ a finite symmetric generating set, and $G = Cay(\Gamma, S)$ the associated Cayley graph, that is $g \in \Gamma$ is connected to $gs$ for every $s \in S$.
$\Gamma$ acts on itself by left multiplication, and this naturally extends to a left action on $X = [0,1]^{V(G)} = [0,1]^\Gamma$ by $gx(\gamma) = x(g^{-1}\gamma)$. The latter action is called the Bernoulli shift of $\Gamma$. We can equip $X$ with a probability measure $\mu$ which is the product of the Lebesgue measure in each coordinate. It is easy to see that the Bernoulli shift action is measure preserving.

$\Gamma$ also naturally acts from the left on $Y = \{0,1\}^{E(G)}$ whose elements can be considered as subsets of $E(G)$. We can also equip $Y$ with the product of uniform measures on the coordinates.

\begin{defin} In our context a \textit{factor of IID} is a measurable, $\Gamma$ equivariant map $\phi : X \to Y$.
\end{defin}

\begin{defin}\label{def:bernoulligraphing} The graphing $\GG$ associated to the Bernoulli shift and $S$ is given by $\GG(V) = X$ and $\GG(E) = \{(x,y) \in X \times X: \exists s \in S, s^{-1}(x) = y\}$. The connected component of almost any point $x\in X$ is isomorphic to the Cayley graph $G$.
\end{defin} 

\begin{claim}\label{claim:correspondence} There is a one-to-one correspondence between measurable subsets $F \subset E(\GG)$ and factors $\phi : X \to Y$. 
\end{claim}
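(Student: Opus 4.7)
The plan is to describe explicit mutually inverse maps between factors $\phi: X \to Y$ and measurable edge subsets $F \subset E(\GG)$. Both sides are encoded by the same underlying data: for each $x \in X$ and each $s \in S$, a measurable bit that records the status of a single edge.

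Given a factor $\phi$, define $F_\phi \subset E(\GG)$ by declaring $(x, s^{-1}x) \in F_\phi$ iff $\{1, s\} \in \phi(x)$, where $1 \in \Gamma$ is the identity and $\phi(x)$ is viewed as a subset of $E(G)$. Conversely, given a measurable $F \subset E(\GG)$, define $\phi_F: X \to Y$ by placing $\{h, hs\} \in \phi_F(x)$ iff $(h^{-1}x, s^{-1}h^{-1}x) \in F$. Measurability in both directions is automatic because each generator $s \in S$ induces a measurable map $x \mapsto s^{-1}x$ on $X$, and $Y$ carries the product $\sigma$-algebra, so it suffices to check measurability on each coordinate of $Y$.

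The substantive points to verify are: (i) $F_\phi$ is well-defined with respect to the unordered edge identification $(x, s^{-1}x) = (s^{-1}x, x)$, which reduces to checking that $\Gamma$-equivariance of $\phi$ gives $\{1, s\} \in \phi(x)$ iff $\{1, s^{-1}\} \in \phi(s^{-1}x)$; (ii) $\phi_F$ is $\Gamma$-equivariant, which is a direct unwinding using the definition $(gx)(\gamma) = x(g^{-1}\gamma)$ and the resulting action on $Y$; and (iii) the two constructions are mutual inverses, which becomes tautological once one observes that equivariance forces $\phi(x)(\{h, hs\}) = \phi(h^{-1}x)(\{1, s\})$, exactly matching the formula defining $\phi_{F_\phi}$, with the reverse composition handled similarly.

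The only real bookkeeping is to keep straight the isomorphism between the connected component of $x$ in $\GG$ and the Cayley graph $G$ itself: the identification sends $g \in \Gamma$ to $g^{-1}x \in X$, so edges of $\GG$ near $x$ pull back to edges of $G$ near the identity, and one must check that the symmetry of $S$ combined with unordered edges on the graphing side corresponds to the $\Gamma$-equivariance on the factor side. No nontrivial obstacle is expected; the statement is essentially a translation between two equivalent languages.
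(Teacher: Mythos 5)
Your proposal is correct and follows essentially the same route as the paper: the same explicit formulas defining $\phi_F$ and $F_\phi$ (up to harmless reordering of the two endpoints of an edge), the same equivariance check, and the same observation that the two constructions invert each other. The extra care you take about well-definedness on unordered edges and about coordinatewise measurability is sound but is a minor elaboration of the paper's brief argument, not a different approach.
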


\begin{proof} Let $F \subset E(\GG)$ be a measurable subset and $f : E(\GG) \to \{0,1\}$ its characteristic function. Define $\phi_F : X \to Y$ by the following formula.
\[ \phi_F(x)(g,gs) = f(s^{-1}g^{-1}x,g^{-1}x). \] 
Then
\[ (h\phi_F(x))(g,gs) = \phi_F(x)(h^{-1}g, h^{-1}gs) = f(s^{-1}g^{-1}hx, g^{-1}hx) = \phi_F(hx)(g,gs),\] so we do get a factor.

Conversely, given a factor $\phi$ one can define a subset $F_\phi \subset E(\GG)$ by choosing the edge $s^{-1}x,x$ to be part of $F_\phi$ if and only if $\phi(x)(id,s) = 1$.
\end{proof}

\begin{remark}\label{remark:aepmatching} ~
\begin{itemize}
\item From this construction it is clear that $F$ is an almost everywhere perfect matching if and only if $\phi$ is a factor of IID perfect matching.
\item There is an entirely analogous correspondence between measurable subsets of $V(\GG)$ and factors $\phi : X \to \{0,1\}^\Gamma$. In Lemma 2.3 of~\cite{LN} then translates into the fact that if the Cayley graph $G$ is non-amenable then there is a $c_0 > 0$ depending only on the expansion of $G$, such that the graphing $\GG$ associated to the Bernoulli shift is a $c_0$-expander.
\end{itemize} 
\end{remark}

%%%%%%%%%%%%%%%%%%%%%%%%%%%%%%

\section{Perfect matchings in vertex transitive graphs}\label{section:existence}

Let $G(V,E)$ be an infinite, connected, $d$-regular, vertex transitive graph. In this section we show that $G$ has a perfect matching. The proof is done in three steps. 

\begin{defin} A \textit{cut} is a partition of $V$ into a nonempty finite set $A$ and its complement $A^c = V \setminus A$. The size of the cut is the number of edges between $A$ and its complement.  A \textit{best cut} is a cut with minimum size. 
\end{defin}

\begin{lemma} Suppose $A,B \subset V$ are different finite subsets defining best cuts. Then each of the sets $A \setminus B$, $B \setminus A$, $A \cup B$, and $A \cap B$ is either empty or defines a best cut.
\end{lemma}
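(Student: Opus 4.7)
The plan is the classical \emph{uncrossing} argument based on submodularity of the cut function. First, I would partition the vertex set $V$ into four types according to membership in $A$ and $B$: type $1 = A\cap B$, type $2 = A\setminus B$, type $3 = B\setminus A$, type $4 = (A\cup B)^c$. For each edge, counting which of the six boundary sizes $|\partial A|,|\partial B|,|\partial(A\cap B)|,|\partial(A\cup B)|,|\partial(A\setminus B)|,|\partial(B\setminus A)|$ it contributes to (this depends only on the types of its two endpoints), I would derive the two edge-counting identities
\begin{equation*}
|\partial A|+|\partial B|=|\partial(A\cap B)|+|\partial(A\cup B)|+2\,e(A\setminus B,\,B\setminus A),
\end{equation*}
\begin{equation*}
|\partial A|+|\partial B|=|\partial(A\setminus B)|+|\partial(B\setminus A)|+2\,e(A\cap B,\,(A\cup B)^c),
\end{equation*}
where $e(X,Y)$ denotes the number of edges between $X$ and $Y$.

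Next, let $c = |\partial A| = |\partial B|$ be the minimum cut size. Each of the four subsets in question is contained in the finite set $A\cup B$, so whenever nonempty it is itself a finite nonempty subset of $V$ and hence defines a cut of size at least $c$. The first identity, together with nonnegativity of $e(A\setminus B, B\setminus A)$, gives $|\partial(A\cap B)|+|\partial(A\cup B)|\le 2c$; so if both $A\cap B$ and $A\cup B$ are nonempty, each boundary is squeezed to exactly $c$, making both best cuts (and forcing $e(A\setminus B,B\setminus A)=0$). The analogous squeeze applied to the second identity handles $A\setminus B$ and $B\setminus A$ when both are nonempty.

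The main obstacle I anticipate is the degenerate configurations where one of the sets on a given right-hand side is empty. For instance, if $A\cap B=\emptyset$, the first identity alone only gives $|\partial(A\cup B)|\le 2c$, which is not enough to conclude $A\cup B$ is a best cut; similarly for $A\subseteq B$ or $B\subseteq A$. I would handle these by using the other identity, or by exploiting that in the disjoint case $A\cup B$ is a disjoint union (so $|\partial(A\cup B)|=2c-2e(A,B)$ and one reduces the claim to showing $e(A,B) = c/2$), possibly invoking the vertex-transitivity of $G$ to rule out the remaining pathological configurations in the context where the lemma is applied.
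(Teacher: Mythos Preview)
Your approach is the same uncrossing argument as the paper's. The paper writes out (the inequality form of) your second identity to conclude that $A\setminus B$ and $B\setminus A$ are best cuts when both are nonempty, and then says ``a similar argument works'' for the pair $A\cap B$, $A\cup B$; this is exactly your first identity.

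You are right to flag the degenerate configurations, and your proposed fixes will not work: neither the other identity nor vertex-transitivity rescues them, because the lemma as stated is actually a little too strong. In the $3$-regular tree every singleton is a best cut (size $d=3$), but for distinct vertices $u,v$ the set $A\cup B=\{u,v\}$ has cut size $4$ or $6$, so it is not a best cut even though $A=\{u\}$ and $B=\{v\}$ are. The paper's proof has the same gap: once one of $A\setminus B,B\setminus A$ is empty, the inequality only gives that the other has cut at most $2c$, which is not enough. What the uncrossing argument genuinely yields---and all that is ever used downstream---is the paired version: if \emph{both} of $A\setminus B$ and $B\setminus A$ are nonempty then both are best cuts, and likewise for the pair $A\cap B,A\cup B$. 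In the subsequent applications $Y$ is an automorphic image of $X$ sharing a vertex with $X$, so $|X\setminus Y|=|Y\setminus X|$ and $X\cap Y\neq\emptyset$, and the degenerate cases never arise. Your non-degenerate argument is therefore already a complete proof of what is actually needed; do not try to push the degenerate cases through.
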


\begin{proof} Let $X = A \setminus B, Y = B \setminus A, Z = A \cap B, W  = V \setminus (A \cup B)$. Then 
\begin{multline*} |E(X, X^c)| + |E(Y, Y^c)| =\\= 2|E(X,Y)| +|E(X,Z)|+|E(X,W)|+|E(Y,Z)|+|E(Y,W)| \leq \\ \leq 2|E(X,Y)| +|E(X,Z)|+|E(X,W)|+|E(Y,Z)|+|E(Y,W)| + 2|E(Z,W)| = \\ = |E(X \cup Z,Y\cup W)| + |E(Y \cup Z, X \cup W)| = |E(A, A^c)| + |E(B,B^c)|
\end{multline*}
This shows that the cuts defined by $X$ and $Y$ are together at most twice the size of the best cut, hence they must be best cuts as well. (Or empty sets.) A similar argument works for $Z$ and $W$ (or rather $X \cup Y \cup Z$, since that is the finite set) as well.
\end{proof}

\begin{lemma}\label{lemma:smallcut} The size of the best cut in $G$ is $d$.
\end{lemma}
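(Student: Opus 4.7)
The upper bound $\leq d$ is immediate: any singleton $\{v\}$ defines a cut of size $d$ because $G$ is $d$-regular. So the content of the statement is the lower bound, namely that no finite $A$ can satisfy $|E(A, A^c)| < d$.

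My plan is to pick a best cut $A$ that minimizes $|A|$ among all best cuts, and show that every vertex in $A$ has the same external degree $k := d_{A^c}(v)$, the number of its neighbors in $A^c$. Given $v, w \in A$, by vertex transitivity there is an automorphism $g$ with $g(w) = v$, and then $gA$ is also a best cut, and it contains $v$. Now I apply the preceding lemma to $A$ and $gA$: their intersection is nonempty (it contains $v$), so it must itself be a best cut. If $gA \neq A$, then $|A \cap gA| < |A|$, contradicting the minimality of $|A|$; hence $gA = A$. But then $g$ preserves the cut, so $d_{A^c}(w) = d_{A^c}(g(w)) = d_{A^c}(v)$, as desired.

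With uniform external degree $k$, the cut size equals exactly $k|A|$, and the induced subgraph $G[A]$ is $(d-k)$-regular. Whenever $k < d$, this forces $|A| \geq d-k+1$ (an $(d-k)$-regular graph needs at least that many vertices), and hence $k|A| \geq k(d-k+1) = d + (k-1)(d-k) \geq d$ for $1 \leq k \leq d-1$. The remaining case $k = d$ gives $k|A| \geq d$ trivially, and $k \geq 1$ holds because $G$ is connected while both $A$ and $A^c$ are nonempty. Combined with the upper bound, this yields that the best cut has size exactly $d$.

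The subtle step is the cut-symmetry claim: it is where the previous lemma is essential, together with the minimality of $|A|$ among best cuts, to force the setwise stabilizer of $A$ to act transitively on $A$. Once the uniform external degree is in hand, the rest is a short regularity count.
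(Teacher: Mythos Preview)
Your proof is correct and follows essentially the same approach as the paper's. The only cosmetic differences are that you use $A \cap gA$ where the paper uses $A \setminus gA$ (both work via the preceding lemma), and you organize the final count by the uniform external degree $k$ while the paper splits on whether $|X| < d$ or $|X| \geq d$; these lead to the same inequality $k(d-k+1) \geq d$ in disguise.
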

\begin{proof}
Let $X$ be a smallest finite set that defines a best cut. For any pair of vertices $x,y \in X$ there is an automorphism of $G$ that takes $x$ to $y$. Let $Y$ be the image of $X$ under this automorphism. Then clearly $Y$ also defines a best cut, hence $X \setminus Y$ is also a best cut. But $|X \setminus Y| < |X|$ contradicting the minimality of $X$, unless $X = Y$. Hence the graph spanned by $X$ is vertex transitive. 
If $|X| < d$, then the number of edges leaving $X$ is at least $|X|(d - |X| +1) \geq d$ and we are done. If $|X| \geq d$, then since $G$ is connected, there is an edge between a vertex $x \in X$ and $V \setminus X$. But then by vertex transitivity of $X$, there is such an edge from every single vertex of $X$, giving the desired lower bound $|E(X,X^c)| \geq |X| \geq d$.
\end{proof}

\begin{corol} Since the number of edges leaving any finite set $Y$ is at most $d|Y|$, and the number of edges entering any finite set $X$ is at least $d$, we get that the number of finite of components of $G \setminus Y$ is at most $|Y|$. 
\end{corol}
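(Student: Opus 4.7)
The plan is to combine the two ingredients stated in the corollary in the obvious way, using Lemma~\ref{lemma:smallcut} to bound the edge-boundary of each finite component of $G \setminus Y$ from below, and then a degree count to bound the total number of edges between $Y$ and $V \setminus Y$ from above.

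First I would let $C_1, C_2, \dots$ be the finite components of $G \setminus Y$. Each $C_i$ is a finite nonempty subset of $V(G)$, so by Lemma~\ref{lemma:smallcut} it defines a cut of size at least $d$, i.e.\ $|E(C_i, V \setminus C_i)| \geq d$. The key observation is that since $C_i$ is an entire connected component of $G \setminus Y$, every edge leaving $C_i$ in $G$ must have its other endpoint in $Y$ (edges to other components of $G \setminus Y$ are forbidden by the definition of a component). Hence $|E(C_i, Y)| \geq d$ for each $i$.

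Next I would sum these lower bounds. Since the sets $C_i$ are pairwise disjoint, the sets of edges $E(C_i, Y)$ are pairwise disjoint subsets of $E(Y, V \setminus Y)$, so
\[ d \cdot (\text{number of finite components of } G \setminus Y) \leq \sum_i |E(C_i, Y)| \leq |E(Y, V \setminus Y)| \leq d|Y|, \]
where the last inequality is just $d$-regularity. Dividing by $d$ gives the claimed bound.

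There is no real obstacle here; the only thing to be careful about is the restriction to \emph{finite} components, which is exactly what is needed to invoke Lemma~\ref{lemma:smallcut} (whose hypothesis requires a finite set on one side of the cut).
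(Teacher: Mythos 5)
Your argument is exactly the one the paper has in mind: the corollary's statement already names its two ingredients (the degree bound $|E(Y,Y^c)| \le d|Y|$ and Lemma~\ref{lemma:smallcut}'s lower bound $|E(C_i,Y)| \ge d$ for each finite component $C_i$), and your proof combines them by disjointness of the edge sets $E(C_i,Y)$ precisely as intended. Correct, and essentially identical to the paper's reasoning.
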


Now we are ready to show the existence of perfect matchings in infinite vertex transitive graphs.

\begin{proof}[Proof of Theorem~\ref{theorem:existsmatching}] By compactness it is sufficient to show that any finite subset $X \subset V$ can be covered by a matching in $G$. So assume for contradiction that there is no matching in $G$ that covers a given finite set $X$. 

Let us construct an auxiliary finite graph $G'(V',E')$ as follows. Let $V' = X \cup \partial X \cup M$ where $\partial X$ is the outer vertex boundary of $X$ and $M$ is a non-empty set of new vertices such that $|V'|$ is even. We define the edge set $E'$ to contain all original edges spanned by $X \cup \partial X$, furthermore we add all edges in $\partial X \cup M$ to make it a clique. 

If $G'$ has a perfect matching, then just keeping those edges of the matching that intersect $X$ gives a matching in $G$ that covers $X$. So we can assume that $G'$ does not have a perfect matching. Then by Tutte's theorem there is a set $Y \subset V'$ such that the number of odd components of $G' \setminus Y$ is greater than $|Y|$. But since $|V'|$ is even, we actually get that the number of odd components of $G' \setminus Y$ is at least $|Y| + 2$. 

The vertices of $\partial X \cup M$ are always in a single component. Thus we can assume that $Y$ is disjoint from $M$, since removing vertices of $M$ from $Y$  affects at most one component while reducing the size of $Y$. Then $Y$ can be thought of a subset of $V$, and it is easy to see that any finite component of $G' \setminus Y$ is also a finite component of $G \setminus Y$, except perhaps for the one component containing $M$. Still, this means that $G \setminus Y$ has at least $|Y| + 1$ odd components, all of which are finite, contradicting the previous Corollary.
\end{proof}

Later we will need a slight strengthening of Lemma~\ref{lemma:smallcut}. We say that a \textit{real cut} is a cut where the finite set has at least 2 elements. 

\begin{lemma}\label{lemma:strongsmallcut} The size of the smallest real cut is bigger than $d$, unless every vertex of $G$ is in a unique $d$-clique.
\end{lemma}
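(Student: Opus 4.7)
The plan is to replay the proof of Lemma~\ref{lemma:smallcut} for the smallest real cut. By Lemma~\ref{lemma:smallcut} the minimum cut size is $d$, so assume for contradiction that the smallest real cut also has size $d$; let $X$ be a smallest finite set with $|X|\geq 2$ and $|E(X, X^c)| = d$. The main new subtlety, and the reason the argument implicitly needs $d \geq 3$, is the boundary case $|X|=2$.

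First I would argue that $X$ induces a vertex-transitive subgraph. For any $x,y\in X$ and automorphism $g$ of $G$ with $g(x)=y$, the set-operation lemma preceding Lemma~\ref{lemma:smallcut} ensures that $X\setminus g(X)$ and $X\cap g(X)$ are each empty or a best cut. A quick edge count shows that $|X|=2$ is incompatible with $|E(X,X^c)|=d$ when $d\geq 3$ (two vertices span at most one edge, so $|E(X,X^c)|\geq 2d-2>d$), so $|X|\geq 3$. Then neither $X\setminus g(X)$ nor $X\cap g(X)$ can have size in $[2,|X|-1]$ without violating the minimality of $X$ among real cuts of size $d$, and a brief case check on $|X\setminus g(X)|\in\{0,1\}$ forces $g(X)=X$.

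With $X$ vertex-transitive, let $r$ denote the common internal degree; then $|X|(d-r)=d$. Since $r=d$ would imply $X=V$ by connectivity while $X$ is finite, and since $r\leq |X|-1$, combining these constraints forces $|X|=d$ and $r=d-1$. Thus $X$ is a $d$-clique. By vertex-transitivity of $G$ itself, every vertex lies in at least one $d$-clique.

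For uniqueness, suppose $v$ is contained in two distinct $d$-cliques $X$ and $Y$. Both are best cuts, so $X\cap Y$, which is nonempty since it contains $v$, is also a best cut by the set-operation lemma. Since $|X|=|Y|=d$ and $X\neq Y$ we have $|X\cap Y|<d$, and the inequality $|E(A,A^c)|\geq |A|(d-|A|+1)$ used in the proof of Lemma~\ref{lemma:smallcut} attains $d$ for $|A|<d$ only at $|A|=1$, so $X\cap Y=\{v\}$. Then $v$ has $d-1$ neighbors inside each of the disjoint sets $X\setminus\{v\}$ and $Y\setminus\{v\}$, giving $2(d-1)$ neighbors total; combined with $\deg(v)=d$ this yields $d\leq 2$, contradicting $d\geq 3$.
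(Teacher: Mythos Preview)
Your argument is correct and follows the paper's overall strategy: rule out $|X|=2$, use the set-operation lemma together with minimality to force $g(X)=X$ and hence vertex-transitivity of the induced subgraph, deduce that $X$ is a $d$-clique, and then prove uniqueness. The first three parts are essentially identical to the paper's proof; your equation $|X|(d-r)=d$ is just a repackaging of the paper's inequality $|X|(d-|X|+1)\le d$.

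The genuine difference is in the uniqueness step. The paper argues directly: a degree count at a vertex of $A\cap B$ forces $|A\cap B|=d-1$, then one locates a vertex $a\in A\setminus B$ with an external neighbour $c$, checks that $c$ cannot lie in any $d$-clique through $a$ other than $A$, and finishes by transitivity (every vertex must lie in the same number of $d$-cliques). Your route is shorter: you reuse the set-operation lemma to conclude that $X\cap Y$ is itself a best cut, invoke the bound $|E(A,A^c)|\ge |A|(d-|A|+1)$ to force $|X\cap Y|=1$, and then a one-line neighbour count at $v$ gives the contradiction $2(d-1)\le d$. This is a nice simplification --- it avoids the somewhat delicate analysis of the vertex $a$ and the appeal to transitivity at the end --- at the cost of depending once more on the submodularity lemma. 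One small remark: in your case check you should also note explicitly that $|X\setminus g(X)|=|X|$ is excluded because $y\in X\cap g(X)$; the phrasing ``$|X\setminus g(X)|\in\{0,1\}$'' slightly hides this.
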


\begin{proof} Suppose the size of the smallest real cut is $d$, and let $X$ be a smallest finite set that defines a smallest real cut. It is clear that $|X| > 2$ since a set of size 2 defines a cut of size at least $2d - 2 > d$. As before, let $x,y \in X$ and let $Y$ be the image of $X$ under an automorphism taking $x$ to $y$. We are going to distinguish between three cases according to the size of $X \setminus Y$, which is the same as the size of $Y \setminus X$.

If they have more than 1 element each, then they also real cuts and hence by Lemma~\ref{lemma:smallcut} they are also smallest real cuts, contradicting the minimality of $X$. 

If  they are both of size 1, then $|X \cap Y|$ and $|X \cup Y|$ both have to be bigger than 1, hence they are also smallest real cuts, again contradicting the minimality of $X$. 

Thus $|X \setminus Y| = 0$, hence $X = Y$, so just like in the proof of Lemma~\ref{lemma:smallcut} we get that $X$ itself is vertex transitive. Thus, by connectivity, each vertex of $X$ has an edge leaving $X$. Thus if $|X| \geq d+1$ then we are done. So $|X| \leq d$ and thus the number of edges leaving $X$ is at least $|X|(d - |X| +1)$. This is strictly greater than $d$, unless $|X| = 1$ or $X$ is a clique of size $d$. The first is clearly not the case since $X$ is a real cut. Thus $X$ is a $d$-clique. Then, of course, by transitivity every vertex of $G$ is in a $d$-clique.  

Finally, it is not possible that a vertex is contained in more than one $d$-clique. If two different $d$-cliques $A$ and $B$ intersect then by degree of the vertices in the intersection we see that $|A \cap B| = d-1$. Let $\{a\} = A \setminus B$ and $\{b\} = B \setminus A$. If $a$ and $b$ would be neighbors then the graph would not be connected. Thus $a$ has to have one neighbor $c$ outside of $B$.  But $c$ cannot be connected to vertices in $A \cap B$, so $A$ is the only $d$-clique that contains $a$. But by transitivity each vertex has to be contained in the same number of $d$-cliques, contradicting our setup. Thus two different $d$-cliques cannot intersect.
\end{proof}

\begin{corol}\label{corol:clique_structure}
If the size of the smallest real cut in $G$ is exactly $d$ then there is a perfect matching in $G$ that is invariant under the automorphism group of $G$. This matching is given by choosing the unique edge from each vertex that leaves the $d$-clique the vertex is contained in. 
\end{corol}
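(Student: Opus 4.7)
The plan is to read off the matching directly from the clique structure guaranteed by Lemma~\ref{lemma:strongsmallcut}. Since the smallest real cut has size exactly $d$, that lemma tells us that every vertex $v\in V(G)$ lies in a unique $d$-clique, which we denote $K_v$. Because $G$ is $d$-regular and $K_v$ already accounts for $d-1$ of $v$'s neighbors (the other vertices of the clique), $v$ has exactly one neighbor outside $K_v$. Let $e_v$ denote this unique edge.

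The main (and essentially only) nontrivial step is to check that the edges $\{e_v : v \in V\}$ form a matching, i.e.\ that if $e_v = \{v,u\}$ then $e_u = \{v,u\}$ as well. This reduces to showing $v \notin K_u$. Suppose for contradiction that $v \in K_u$. Then $K_u$ is a $d$-clique containing $v$, but $K_v$ is also a $d$-clique containing $v$; by the uniqueness half of Lemma~\ref{lemma:strongsmallcut}, this forces $K_u = K_v$. That contradicts $u \notin K_v$, which was the very definition of $u$ as the outside neighbor of $v$. So $v$ is the unique outside neighbor of $u$, and $e_u = e_v$.

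Since every vertex is covered by exactly one such edge, and no two distinct $e_v$ share a vertex, the collection $\{e_v\}$ is a perfect matching. Finally, any automorphism $\varphi$ of $G$ must send $d$-cliques to $d$-cliques, so $\varphi(K_v) = K_{\varphi(v)}$ by uniqueness, and therefore $\varphi$ permutes the set of "outside" edges. Hence the matching is invariant under $\mathrm{Aut}(G)$. The one subtlety to double-check is the $d = 2$ case (where a $d$-clique is just an edge and the argument still goes through because each vertex still has a unique neighbor "outside" its clique edge), but no real obstacle arises.
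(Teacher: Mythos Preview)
Your proof is correct and matches the paper's intended argument: the corollary is stated without proof immediately after Lemma~\ref{lemma:strongsmallcut}, and you have simply written out the obvious verification that the ``outside'' edges form an $\mathrm{Aut}(G)$-invariant perfect matching. The only unnecessary aside is the $d=2$ remark---the proof of Lemma~\ref{lemma:strongsmallcut} already uses $2d-2>d$, so the context implicitly assumes $d\geq 3$.
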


%%%%%%%%%%%%%%%%%%%%%%%%%%%%%%

\section{Factor of iid perfect matchings via Borel graphs -- the proof of Theorem~\ref{theorem:factorofiid}}\label{section:factorofiid}

Let $\Gamma$ be a finitely generated non-amenable group, $S$ a finite symmetric generating set of size $|S|=d$, and $G$ the associated Cayley graph. We want to construct a factor of IID perfect matching in $G$.

If the size of the smallest real cut in $G$ is equal to $d$, then by Corollary~\ref{corol:clique_structure} there is a fixed perfect matching in $G$ that is invariant under the action of the automorphism group, and each vertex can decide which edge to choose by observing its own 1-neighborhood, so this is clearly a factor of IID matching and we are done.

Thus we can assume that the smallest real cut in $G$ is at least of size $d+1$. Let $\GG$ be the graphing associated to the Bernoulli shift, as in Definition~\ref{def:bernoulligraphing}. By Claim~\ref{claim:correspondence} and Remark~\ref{remark:aepmatching} it follows that $\GG$ is a $c_0$-expander for some $c_0 > 0$ depending only on $G$. Hence $\GG$ is admissible in the sense of Definition~\ref{conditions}. 

 By Remark~\ref{remark:aepmatching} it is now sufficient to prove that $\GG$ has an almost everywhere perfect matching.  Proposition 1.1 in~\cite{EL} shows that there exists a sequence of matchings $M_0, M_1, M_2, \dots \subset \GG$ such that a) there are no augmenting paths of length $2k+1$ in $M_k$ and b) each $M_{k}$ is obtained from $M_{k-1}$ by a sequence of flipping augmenting paths of length at most $2k+1$. We would like to construct an almost everywhere perfect matching as a limit of the $M_k$s. In order to do this, we have to show that, except for a measure zero set, the status of any edge changes only finitely many times during the process, so we can take a "pointwise" limit of the sequence to obtain a matching that covers but a zero measure subset of $X$.

Let us denote by $U_k$ the set of unmatched vertices in $M_k$. Then in the process of getting $M_{k+1}$ from $M_k$ we are flipping augmenting paths starting and ending in $U_k$. Furthermore each vertex of $U_k$ can be only used once as an endpoint of an augmenting path, since after that it becomes a matched vertex. Any edge that changes status between $M_k$ and $M_{k+1}$ has to be part of an augmenting path at least once. Thus the total measure of status changing edges in this step is at most $(2k+3)|U_k|$. If we can show that $\sum_k (2k+3)|U_k| < \infty$ then by the Borel-Cantelli lemma the measure of edges that change status infinitely many times is zero, and we are done. 

We have seen that $\GG$ is admissible. Let $\ep = |U_k|$. Then by Theorem~\ref{shortalternating_theorem} there is a constant $c = c(c_0,d)$ depending only on the expansion of $\GG$ and the degree $d$, such that there is an augmenting path of length at most $c \log^3(1/\ep)$ in $M_k$. But by definition we know that this has to be longer than $2k+1$. Thus we get $2k+1 \leq c \log^3(1/\ep)$ or equivalently 
\[ |U_k| = \ep < \exp\left(-\left(\frac{2k+1}{c}\right)^{1/3}\right).\]
This is clearly small enough to guarantee that $\sum_k (2k+3)|U_k| < \infty$ and thus complete the proof of Theorem~\ref{theorem:factorofiid}. \qed

\begin{corol} Every $d$-regular infinite Cayley graph has an invariant random perfect matching.
\end{corol}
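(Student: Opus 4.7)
The plan is to split into two cases according to whether the group $\Gamma$ underlying the Cayley graph is amenable or not, and handle each case by citing the appropriate tool established earlier in the paper.

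In the non-amenable case, Theorem~\ref{theorem:factorofiid} produces a factor of IID that is almost surely a perfect matching. A factor of IID is automatically $\Gamma$-equivariant by construction, so pushing the IID measure on $X = [0,1]^\Gamma$ forward through this factor yields a $\Gamma$-invariant probability measure on the set of perfect matchings of $G$, which is by definition an invariant random perfect matching. So the non-amenable case is immediate.

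In the amenable case, Theorem~\ref{theorem:existsmatching} guarantees that $G$ has at least one (deterministic) perfect matching $M_0$. I would then implement the observation of Conley, Kechris, and Tucker-Drob mentioned in the introduction: average $M_0$ over the group action using a F\o lner sequence. Concretely, let $(F_n)$ be a F\o lner sequence in $\Gamma$, and let $\mu_n$ be the uniform probability measure on the finite set $\{g \cdot M_0 : g \in F_n\}$, viewed as a probability measure on $\{0,1\}^{E(G)}$ (which is compact with the product topology). The space of probability measures on $\{0,1\}^{E(G)}$ is weak-* compact, so $(\mu_n)$ has a subsequential weak-* limit $\mu$. The F\o lner property ensures that for each fixed $h \in \Gamma$, the symmetric difference between $h F_n$ and $F_n$ has vanishing density, so $\mu$ is $\Gamma$-invariant. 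Since the set of perfect matchings is a closed subset of $\{0,1\}^{E(G)}$ (being cut out by the countably many closed conditions that each vertex has exactly one adjacent selected edge), and every $\mu_n$ is supported on this set, so is $\mu$. This produces the required invariant random perfect matching.

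The step that needs the most care is the F\o lner averaging: one has to verify that translation by any fixed group element is continuous on $\{0,1\}^{E(G)}$ (clear) and that the F\o lner property indeed forces weak-* limits of uniform averages to be invariant; this is the standard proof of existence of invariant means on amenable groups and goes through verbatim here. Everything else is assembling previously proved results.
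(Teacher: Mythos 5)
Your proof is correct and follows the same two-case decomposition as the paper: non-amenable handled by Theorem~\ref{theorem:factorofiid} (a factor of IID perfect matching is automatically an invariant random perfect matching), amenable handled by combining Theorem~\ref{theorem:existsmatching} with amenability. The only difference is that the paper simply cites Conley--Kechris--Tucker-Drob (Proposition 7.5 of~\cite{CKT}) for the amenable case, whereas you supply a self-contained F\o lner-averaging argument: start from a deterministic perfect matching $M_0$, take the empirical measures $\mu_n = \frac{1}{|F_n|}\sum_{g\in F_n}\delta_{g\cdot M_0}$ on the compact space $\{0,1\}^{E(G)}$, pass to a weak-* subsequential limit, and observe that the F\o lner property gives invariance while closedness of the set of perfect matchings (a finite-degree, hence clopen, local condition at each vertex) gives that the limit is supported on perfect matchings. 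This is a standard Krylov--Bogolyubov-type argument and is a perfectly valid, arguably more elementary, substitute for the CKT citation; one small wording fix is that you should take the weighted average $\frac{1}{|F_n|}\sum_{g\in F_n}\delta_{g\cdot M_0}$ rather than the uniform measure on the set $\{g\cdot M_0 : g\in F_n\}$, since the latter could miscount if $M_0$ has a nontrivial stabilizer, though this does not affect the substance of the argument.
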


\begin{proof} For amenable graphs Conley, Kechris and Tucker-Drob observed in Proposition 7.5 of~\cite{CKT} that Theorem~\ref{theorem:existsmatching} implies the existence of invariant random matchings. 

Since a factor of IID perfect matching is automatically an invariant random perfect matching, Theorem~\ref{theorem:factorofiid} completes the non-amenable case.
\end{proof}

%%%%%%%%%%%%%%

\section{Short alternating paths in expanders}\label{section:shortaugmenting}

Let $G(X,E)$ be a $d$-regular graphing, or a connected, $d$-regular graph that can either be finite or infinite. We are going to treat these three cases at the same time. When it is necessary to point out differences, we will refer to them as the measurable/finite/countable case respectively.

\begin{defin}\label{conditions}
We say that $G$ is \textit{admissible} if it is a $c_0$-expander, and the smallest real cut into odd sets has size at least $d+1$ (in the sense of Lemma~\ref{lemma:strongsmallcut}).
\end{defin}

The following theorem includes the statement of Theorem~\ref{theorem:finite_shortalternating} and the variant about graphings that is needed for the proof of Theorem~\ref{theorem:factorofiid}.

\begin{theorem}\label{shortalternating_theorem} 
For any $c_0 > 0$ and $d \geq$ 3 integer, there is a constant $c =
c(c_0,d)$ that satisfies the following statement. Given any
admissible measurable (or large finite) graph, and a partial
matching with at least $\ep$ measure (or fraction) of unmatched
vertices, there is an augmenting path of length at most $c \log^3(1/\ep)$.

\end{theorem}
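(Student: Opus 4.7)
The plan is to perform an alternating breadth-first search from the set $U$ of unmatched vertices, using the $c_0$-expansion to grow the reachable set geometrically while using the odd-cut hypothesis of Definition~\ref{conditions} to rule out obstructions coming from blossoms. The desired augmenting path will appear as soon as the BFS reaches another unmatched vertex, and the expansion forces this to happen within a polylogarithmic number of steps.

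Concretely, I would start with $U_0 \subseteq U$ of measure at least $\ep$, and recursively build two layered sets: $A_k$, the vertices reachable from $U_0$ by alternating paths of length at most $2k+1$ ending with a non-matching edge, and its extension $B_k$ by one further matching edge. If any vertex in $A_k$ is unmatched we are done, so otherwise the matching gives an essentially measure-preserving bijection $A_k \to B_k \setminus U_0$, and the $c_0$-expansion of $\GG$ applied to $B_k$ then yields $|A_{k+1}| \geq (1 + \Omega(c_0))\,|B_k|$ until $|B_k|$ becomes a bounded fraction of the total mass. A further constant number of rounds at that scale then forces the BFS to touch $U$ again, producing an augmenting path of length $O(\log(1/\ep))$ in the bipartite-like regime.

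The main obstacle, and the reason the exponent in $\log^3(1/\ep)$ is not simply $1$, is the non-bipartite phenomenon: two alternating paths can collide and yield an alternating walk that is not an alternating path, the standard blossom issue. To control this I would identify maximal ``closed'' configurations $H \subseteq V(\GG)$, i.e.\ sets closed under the matching edges and such that every alternating extension into $V(\GG) \setminus H$ loops back into $H$. Such an $H$ must have odd symmetric difference with the matched pairs it contains, and its outgoing boundary has at most $d$ edges, so admissibility forces $|H|$ to be small (or forbids $H$ altogether, after the clique case of Corollary~\ref{corol:clique_structure} has been separated out). Thus blossoms can be contracted one by one without stalling the BFS; each contraction costs a bounded number of additional BFS layers.

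The three logarithmic factors in $c\log^3(1/\ep)$ should then arise from (a) the basic expansion-driven doubling argument that contributes one $\log(1/\ep)$, (b) a second $\log(1/\ep)$ from running the BFS through a potentially large number of blossom contraction/expansion phases, and (c) a third $\log(1/\ep)$ from a union-bound / dyadic scale argument needed to amplify ``most starting vertices succeed'' into ``some explicit short augmenting path exists,'' which is particularly delicate in the measurable setting where we cannot simply union-bound over finitely many sources. I expect the hardest step to be formalizing the blossom contraction uniformly in the finite, countable and measurable cases, because in the graphing setting we must work with a measurable quotient and verify that admissibility is preserved so that the inductive step can continue.
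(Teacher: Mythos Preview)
Your proposal has a genuine gap at the blossom step. You assert that a ``closed'' configuration $H$ would have outgoing boundary at most $d$ edges and that admissibility then forces $|H|$ to be small. Neither claim is right. Admissibility says that any odd set of size $\geq 3$ has \emph{at least} $d+1$ outgoing edges; it does not give any upper bound on such sets, so there is no reason a blossom-like obstruction should be small. In fact the obstructions can occupy an arbitrarily large fraction of the graph, and this is exactly what makes the non-bipartite case hard.

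The paper does not contract blossoms. Instead it classifies vertices reached by the BFS into head-only ($H_n$), tail-only ($T_n$), and both ($B_n$), and isolates the real obstruction: \emph{tough} vertices, tail vertices adjacent to $B_n$ that nevertheless fail to become head vertices. To each tough vertex $x$ it attaches a \emph{family} $F_n(x)\subset B_n$, proves families of distinct tough vertices are disjoint, and---crucially---shows that each family itself grows exponentially, by observing that the growth of $F_n(x)$ inside the component of $x$ obeys the same recursion as the original BFS (this is where the ``forbidden edges'' formalism and the odd-cut bound $d+1$ are actually used: from the odd set $F_n(x)\cup\{x\}$ at least $d+1$ edges leave, and at most $d$ of them go back to $x$). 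The three logarithmic factors then arise from (a) the outer expansion argument on $|X_n|$ and $|B_n|$, (b) the time a tough vertex needs for its family to reach size $\sim 1/\ep$, and (c) combining these via a potential function $J(n)=|X_n|+|B_n|+\tfrac12\int f_n$ over blocks of length $K\sim\log^2(1/\ep)$. Your source (c), a union-bound over starting points, is not needed and would not produce the required structure.
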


Though our main goal is to prove theorems about measurable graphs and finite graphs, we are going to need auxiliary results about infinite, connected $d$-regular graphs as well. Since the three cases can be handled the same way, we are going to present the proofs at the same time, pointing out differences when necessary. In the measurable case, everything will be assumed to be measurable, unless explicitly stated otherwise. If $A, B \subset X$ then $E(A,B)$ will denote the set of edges that have one endpoint in $A$ and the other in $B$. In the measurable case the measure of the set $A$ will be denoted by $|A|$. In the finite case $|A|$ is going to denote the size of $A$ divided by the total number of vertices in the graph. So in both of these cases $0 \leq |A| \leq 1$. In fact, a finite graph can be considered as a graphing with an atomic probability measure. However in the countable case $|A|$ is going to simply denote the size of $A$.  Similarly with edge sets, in the finite and the measurable cases $|E(A,B)|$ will denote the measure of the edge set as defined by the integral (\ref{formula:measure}) in Definition~\ref{def:graphing}. In the countable case $|E(A,B)|$ will just denote the size of the set $E(A,B)$. If we really want to talk about the actual size of sets in the finite case, we will denote it by $||A||$ and $||E(A,B)||$ respectively. 

Let $M \subset E$ be a matching. Then $V(M) \subset X$ shall denote the set of matched vertices. Let $S \subset X \setminus V(M)$ denote a fixed subset of the unmatched vertices and let $F = X \setminus (V(M) \cup S)$ denote the remaining unmatched vertices. We are going to construct alternating paths starting from $S$ in the hope of finding an alternating path connecting two unmatched vertices. Such an alternating path is called an \textit{augmenting path}.  

\subsection{Sketch of the proof}

First we give an outline, pointing out the main ideas without introducing the technical definitions. We encourage the reader to read the whole outline before reading the proof, and also to refer back to it whenever necessary. Without understanding the basic outline, many technical definitions will likely be rather unmotivated. 

\begin{enumerate}
\item We start from a set of unmatched vertices $S$. Assuming there are no short augmenting paths, we would like to show that the set of vertices ($X_n$) accessible via $n$-step or shorter alternating paths grows rapidly, eventually exceeding the size of the whole graph, leading to a contradiction.
\item It will be necessary to keep track of matched vertices accessible via odd paths (head vertices), even paths (tail vertices), or both. In notation $X_n = S \cup H_n \cup T_n \cup B_n$. 
\item If there are plenty of edges leaving $X_n$ from $T_n$ or $B_n$, then the other ends of these edges will be part of $X_{n+1}$, fueling the desired growth. The first observation is that if this is not the case, then there has to be many tail-tail or tail-both edges. 
\item A tail vertex that has another tail- or both-type neighbor will normally become a both-type vertex in the next step. In this case even though $X_n$ does not grow, the set $B_n$ grows within $X_n$, still maintaining the desired expansion that eventually leads to a contradiction.
\item The problem is that certain tail-vertices will not become both-type even though they possess a both-type neighbor. These will be called the \textit{tough} vertices. The bulk of the proof is about bounding the number of tough vertices. The key idea here is that we can associate to each tough vertex $x$ a distinct subset of $B_n$ called the \textit{family} of $x$. Families associated to different vertices are pairwise disjoint. (This is done in Section~\ref{combsec}.)
\item There can not be too many tough vertices with large families. On the other hand if a vertex stays tough for an extended amount of time, its family has to grow. These two observation together should be enough to bound the number of tough vertices.
\item The proof proceeds in two rounds from this point. First, if $X_n$ is smaller than half of the graph, then already families larger than $4d(d+1)/c_0$ are too large, and indeed vertices can't be tough too long before they reach this critical family size. Then all the previous observations are valid and $X_n$ grows exponentially as desired. (This is the contents of Theorem~\ref{smallXn_theorem} and the proof is done in Section~\ref{largeoutside_section}.)
\item In the second round, when $X_n$ is already quite big, this unfortunately does not work anymore. The bound after which families can be deemed too large grows as $|X \setminus X_n|$ shrinks, and thus vertices can be tough longer and longer before their families become big enough. At this point it becomes necessary to show that the families of tough vertices also grow exponentially fast. 
\item In Section~\ref{section:familybusiness} we demonstrate that the dynamics of how a family grows is almost identical to how the sets $X_n$ are growing. In fact families are more or less what can be reached from the tough vertex by an alternating path. But a family lives within an infinite countable graph, hence it is never bigger than "half of the graph", so only the first round is needed to show exponential growth. Hence Theorem~\ref{smallXn_theorem} has a double gain. It proves the first round for $X_n$, but at the same time it is used to prove fast family growth in the second round. 
\item Once we have established exponential family growth, an approach very similar to the proof of the first round is used to complete Theorem~\ref{shortalternating_theorem} in Section~\ref{section:secondround}. The proofs of both rounds employ a method of defining an invariant whose growth is controlled. But the hidden motivation behind the invariant is what we have outlined in this sketch: if $X_n$ doesn't grow then $B_n$ grows. If $B_n$ doesn't grow either then there have to be many tough vertices. If there are many tough vertices then they have to be tough for a long time. But then their families have to become too big. Finally there is no space for all these big families. 
\item Unfortunately there is a final twist. When analyzing family growth in Section~\ref{section:familybusiness}, we have to introduce certain forbidden edges in each step, through which alternating paths are not allowed to pass momentarily. Hence, to be able to use Theorem~\ref{smallXn_theorem} in this more general scenario, we need to state it in a rather awkward way. Instead of saying that $X_n$ is just what can be reached by alternating paths of length at most $n$, we need to use a recursive definition of $X_n$ taking into account the forbidden edges in each step. But as it is pointed out in Remark~\ref{remark:forbidden}, if one chooses to have no forbidden edges, $X_n$ just becomes what it was in this sketch.
\end{enumerate}

The proof is organized as follows. In Section~\ref{section:forbidden} we introduce the basic recursive construction of the $X_k$ sets using the notion of forbidden edges. We state the key Theorem~\ref{smallXn_theorem} that on one hand provides the proof of the first round, and on the other hand will be used to show exponential family growth.

Tough vertices and families are introduced in Section~\ref{combsec} together with proofs of their basic properties. Then Theorem~\ref{smallXn_theorem} and the first round is proved in Section~\ref{largeoutside_section}, using the invariant-technique.

In Section~\ref{section:familybusiness} we show how the growth of a family can be modeled using the forbidden edge construction, and prove exponential growth of families. Finally in Section~\ref{section:secondround} we finish the proof of the second round, again using the invariant-technique.

\subsection{Forbidden edges}\label{section:forbidden}

We are going to use the following terminology. All alternating paths will start with an unmatched edge, but may end with either kind of edges. If $p = (p_0, p_1, \dots, p_l)$ is an alternating path of length $|p| = l$, then the vertices with odd index will be referred to as the "head" vertices of $p$ and the even index vertices (except for $p_0$) will be called "tail" vertices. $p$ will be called even if $l$ is even, and odd if $l$ is odd. The last vertex will be denoted by $\last(p) = p_{l}$.
When this doesn't cause confusion, we will also use $p$ to denote just the set of vertices of the path.

\begin{defin}\label{xk_definition}
Assume that for every $k$ we are given a subset of "forbidden" edges $E_k \subset E$. Using this as input data, we shall recursively construct a sequence of vertex sets \[S = X_0 \subset X_1 \subset X_2 \subset \dots .\] Suppose we have already defined $X_k$. Then $X_{k+1}$ is defined as follows. Take a matched edge $vw$ outside of $X_k$. We are going to include these two vertices in $X_{k+1}$ if and only if there is an even alternating path starting in $S$ whose length is at most $2k+2$, whose last two vertices are $v$ and $w$ in some order while all the previous vertices are in $X_k$, and, most importantly, the edge on which it leaves $X_k$ does not belong to $E_k$. 
\end{defin}

\begin{remark}\label{remark:forbidden}
This definition implies that each $X_k$ consists of matched pairs, and for any vertex $v \in X_k$ there is an alternating path $p \subset X_k$ such that $p_0 \in S$, $|p| \leq 2k$, and $\last(p) = v$. If the $E_k$ are all empty, then $X_k$ consists of all vertices accessible from $S$ via an alternating path of length at most $2k$. First we will show that the size of $X_k$ grows fast.
\end{remark}

\begin{theorem}\label{smallXn_theorem} Suppose that
\begin{enumerate}
\item $|X_n| \leq |X \setminus X_n|$, 
\item there are no augmenting paths of length at most $2n-1$ starting in $S$, and 
\item  $|E_k| \leq d|S|$ for all $0 \leq k < n$,
\item the number of non-forbidden edges leaving $X_k$ is at least $1/(d+1)$ portion of all edges leaving $X_k$ for all $k < n$.
\end{enumerate} 
Then 
\[ |X_n| \geq \frac{c_0^2 |S|}{16d^2(d+1)^2} \left(1+\frac{c_0^3}{128 d^3(d+1)^3}\right)^n.\]
\end{theorem}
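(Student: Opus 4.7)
My plan is to prove the bound via a potential function $\Phi_k := |X_k| + \lambda |B_k|$, where $B_k$ denotes the set of matched vertices in $X_k$ reachable from $S$ by non-forbidden alternating paths of length $\leq 2k$ both as a head vertex and as a tail vertex (in the notation of the outline, $X_k = S \cup H_k \cup T_k \cup B_k$). I would choose $\lambda = \lambda(c_0, d)$ and aim at a one-step geometric growth
\[
\Phi_{k+1} \;\geq\; (1+\eta)\,\Phi_k, \qquad \eta \;=\; \Theta\!\bigl(c_0^{3}/(d^{3}(d+1)^{3})\bigr),
\]
which iterated from $\Phi_0 = |S|$ yields the statement; the leading constant $c_0^{2}/(16 d^{2}(d+1)^{2})$ absorbs the ratio $|X_k|/\Phi_k \geq 1/(1+\lambda)$ together with the first-step bookkeeping, since $B_0 = \emptyset$ and the very first step establishes a lower bound of the form $|X_1| \geq c_0 |S|/(4d(d+1))$.

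Next I would extract what the hypotheses give. By expansion and (1), the number of edges leaving $X_k$ is at least $c_0|X_k||X \setminus X_k| \geq c_0|X_k|/2$, and by (4) the non-forbidden portion is at least $c_0|X_k|/(2(d+1))$. Assumption (2) forces every vertex of $S$ to have its neighbors matched inside $X_1 \subseteq X_k$, and since matched pairs of $X_k$ live inside $X_k$, every leaving edge is an unmatched edge emanating from $H_k \cup T_k \cup B_k$. I would then perform the dichotomy sketched in steps 3--4 of the outline. \emph{Case A:} if at least half of the non-forbidden leaving edges come from $T_k \cup B_k$, each such edge together with the matched edge at its outer endpoint contributes a new matched pair to $X_{k+1}$; a multiplicity bounded by $d$ gives $|X_{k+1}| - |X_k| \geq c_0|X_k|/(4d(d+1))$, which is much more than enough. \emph{Case B:} otherwise at least half of the non-forbidden leaving edges emanate from $H_k$, and since each $H_k$-vertex has only $d-1$ unmatched incidences, a degree balance inside $X_k$ forces a linear (in $|X_k|$) number of internal unmatched $T$-$T$ and $T$-$B$ edges.

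For any such edge $vw$ with $v \in T_k$, concatenating a tail-witnessing path of $w$ with the internal edge $wv$ witnesses $v$ as a head, so generically $v \in B_{k+1}$. The only obstructions are the \emph{tough} vertices of Section~\ref{combsec}, whose count is controlled by the family construction: each tough vertex gets a pairwise-disjoint $F(v) \subseteq B_k$ of size $O(d(d+1)/c_0)$, so the total number of tough vertices is at most $O(|B_k| d(d+1)/c_0)$. Feeding this into the internal-edge count yields $|B_{k+1}| - |B_k| = \Omega\!\bigl(c_0^{2}|X_k|/(d^{2}(d+1)^{2})\bigr)$, and taking $\lambda = \Theta\!\bigl(c_0/(d(d+1))\bigr)$ balances the two cases so that $\Phi_{k+1} - \Phi_k \geq \eta \Phi_k$ regardless of which case occurs.

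The principal obstacle, and the technical heart of the argument, is the tough-vertex accounting in Case B. The family-size bound leans on admissibility in two essential ways: Lemma~\ref{lemma:strongsmallcut} guarantees the smallest odd real cut is $\geq d+1$, which forces the obstruction set assigned to any tough vertex to contain at least three elements of $B_k$; and the $c_0$-expansion of $\GG$ then caps its size by comparing the number of leaving edges with the admissible outflow. The delicacy lies in showing the families are genuinely disjoint and in coordinating the constants so the tough-vertex bound is strictly beaten by the internal $T$-$T$/$T$-$B$ edge count produced by the Case B degree balance, which is where assumption (3) enters: it ensures the degree surplus driving the internal-edge count is not eaten by forbidden edges. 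Once these estimates line up, the multiplicative growth $\Phi_{k+1} \geq (1+\eta)\Phi_k$ follows, and the theorem is obtained by induction on $k$.
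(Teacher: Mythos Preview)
Your overall architecture---a potential function with a case split on whether leaving edges come mostly from $T_k\cup B_k$ or from $H_k$---matches the paper's, but the tough-vertex accounting in Case B is where the argument breaks. You write that ``each tough vertex gets a pairwise-disjoint $F(v) \subseteq B_k$ of size $O(d(d+1)/c_0)$, so the total number of tough vertices is at most $O(|B_k|\, d(d+1)/c_0)$.'' Neither clause is right: families are not bounded above (they can be arbitrarily large), and even the trivial bound $|TT_k|\le |B_k|$ from disjointness of nonempty families is far too weak---in Case B the internal $T$--$T$/$T$--$B$ edge count is of order $c_0|X_k|/(d(d+1))$, and $|B_k|$ can be comparable to $|X_k|$, so subtracting $|TT_k|$ wipes out the gain. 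The odd-cut lemma does not force $|F(v)|\ge 3$; it forces $\{v\}\cup F(v)$ to have at least $d+1$ boundary edges, which the paper uses in a different way (see below). A two-term potential $|X_k|+\lambda|B_k|$ simply cannot absorb the tough vertices.

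What the paper actually does is split $TT$ into three pieces and treat each differently. Tough vertices with families of size $\ge c_1=4d(d+1)/c_0$ (the set $TB$) satisfy $|TB|\le |B|/c_1$ by disjointness---this is the only place a cardinality bound on tough vertices is used. Tough vertices with small \emph{non-expanding} families ($TG$) are not counted at all; instead the odd-cut hypothesis gives $|E(F(x),x)|\le |E(F(x),H\cup TM\cup O)|$ for each such $x$, so the tough edges they contribute are dominated by edges already accounted for. Tough vertices with small \emph{expanding} families ($TE$) are the real obstruction, and the paper handles them by adding a third term to the invariant: $I(n)=|X_n|+|B_n|+\tfrac12\int e_n(x)\,dx$, where $e_n(x)$ counts past moments when $x$'s family was small and expanding. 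Each step $|TE|$ is absorbed into $\int(e_{n+1}-e_n)$, and Corollary~\ref{bounded_en_corol} gives the global bound $e_n(x)\le c_1^2$, which is what produces the prefactor $c_0^2/(16d^2(d+1)^2)=1/c_1^2$ in the statement. Without this third, time-integrated term (or an equivalent amortization device), the one-step inequality $\Phi_{k+1}\ge(1+\eta)\Phi_k$ cannot be established.
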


Note that the first condition is always satisfied in the countable case, since $X_n$ is always finite.

We will need a more refined classification of the vertices in $X_n$. First of all let $\path_o$ denote the set of all odd alternating paths starting from $S$, and $\path_e$ the set of all even alternating paths. For every $n \geq 1$ let us define the following subsets of $X_n$. Let 
\[ \tilde{H}_n = \{ x \in X_n : \exists p \in \path_o (1 \leq |p| \leq 2n, p \subset X_n; \last(p) = x)\},\] 
\[ \tilde{T}_n = \{ x \in X_n : \exists p \in \path_e (2 \leq |p| \leq 2n, p \subset X_n; \last(p) = x)\},\] 
\[  
H_n = \tilde{H}_n \setminus \tilde{T}_n,\] \[ T_n = \tilde{T}_n \setminus \tilde{H}_n, \] \[ B_n = \tilde{H}_n \cap \tilde{T}_n.
\]

It is important that in these definitions we are not insisting that the paths avoid forbidden edges at any time. The forbidden edges only limit the definition of $X_n$, but then we want to consider all possible alternating paths within the set. 

The last three are the set of head vertices, the set of tail vertices, and those that can be both heads or tails. It is clear that $S$ and $T_n$ are disjoint. As long as there are no augmenting paths of length at most $2n-1$, then $S$ is also disjoint from  $\tilde{H}_n$, and thus $X_n$ is a disjoint union of $S, H_n, B_n$, and $T_n$. It follows from the definition that  $B_1 \subset B_2 \subset \dots$, furthermore $M$ gives a perfect matching between $T_n$ and $H_n$, and also within $B_n$. (Note that this implies $|H_n| = |T_n|$.)

The rough idea of why $X_n$ should grow fast is this. By expansion, even in the presence of forbidden edges, there are plenty of edges leaving $X_n$. Any edge leaving $X_n$ from $\tilde{T}_n$ adds to the size of $X_{n+1}$ directly. Only edges leaving from $H_n$ cause problems. But since $H_n$ and $T_n$ have the same total degree, any surplus of edges leaving $H_n$ have to be compensated by edges within $T_n$ or between $B_n$ and $T_n$. Such edges will contribute to the growth of $B_n$ within $X_n$, and thus implicitly to the growth of $X_n$.

\subsection{Combinatorics of alternating paths}\label{combsec}

In this section we will be mainly concerned about how edges within $T_n \cup S$ and between $B_n$ and $T_n \cup S$ contribute to the growth of $B_n$.

\begin{lemma}\label{TTedge_lemma} If $x,y \in T_n \cup S$ and $xy \in E$ then either $x \in B_{n+1}$ or $y \in B_{n+1}$ or there is an augmenting path of length at most $2n+1$.
\end{lemma}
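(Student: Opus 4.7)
The plan is to first observe that the edge $xy$ must be unmatched: vertices of $T_n$ are matched only to partners in $H_n$ (which is disjoint from $T_n \cup S$), and vertices of $S$ are unmatched, so no matched edge can have both endpoints in $T_n \cup S$. After that I would split into cases based on whether $x$ and $y$ lie in $S$ or in $T_n$.

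If both $x,y \in S$, then $xy$ itself is an augmenting path of length $1 \leq 2n+1$ and we are done. If $x \in S$ while $y \in T_n$ (and symmetrically if the roles are reversed), the single edge $(x,y)$ is an odd alternating path of length $1 \leq 2(n+1)$ from $S$ to $y$ lying inside $X_n \cup S \subset X_{n+1}$; this places $y$ in $\tilde{H}_{n+1}$, and combined with $y \in T_n \subset \tilde{T}_n \subset \tilde{T}_{n+1}$ we conclude $y \in B_{n+1}$.

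The interesting case is $x,y \in T_n$. My plan is to pick shortest even alternating paths $p$ and $q$ inside $X_n$ from $S$ to $x$ and to $y$, of lengths $2k$ and $2j$ respectively, both at most $2n$. The two natural candidates for odd alternating paths witnessing membership in $\tilde{H}_{n+1}$ are the extensions $(p_0, \ldots, p_{2k}=x, y)$ and $(q_0, \ldots, q_{2j}=y, x)$, each of length at most $2n+1 \leq 2(n+1)$ and contained in $X_{n+1}$. Each is automatically alternating, because the appended edge $xy$ is unmatched while the previous edge of $p$ or $q$ is matched. The only way such an extension can fail to place $y$ (respectively $x$) into $B_{n+1}$ is for it to not be simple.

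The hard part will be ruling out simultaneous failure, \ie that $y \in p$ and $x \in q$. Writing $y = p_i$, the parity of $i$ is forced to be even (odd $i$ would make the prefix $p_0, \ldots, p_i$ put $y$ into $\tilde{H}_n$, contradicting $y \in T_n$), $i \neq 0$ (else $y \in S$), and $i < 2k$ (else $y = x$), so $2 \leq i \leq 2k-2$; and similarly $x = q_l$ with $2 \leq l \leq 2j-2$. But then the prefix $p_0, \ldots, p_i$ is an even alternating path to $y$ inside $X_n$ of length $i$, so by minimality of $|q|$ one has $2j \leq i$, and symmetrically $2k \leq l$. Chaining yields $2k \leq l \leq 2j-2 \leq i-2 \leq 2k-4$, which is absurd, so at least one of the two extensions is simple. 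The key insight behind the plan is that by working with shortest representatives $p$ and $q$ one avoids having to construct an explicit augmenting path of the form $p$ followed by $xy$ followed by the reverse of $q$, whose length can reach roughly $4n$ and would exceed the bound $2n+1$ demanded by the lemma.
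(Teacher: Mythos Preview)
Your proof is correct and follows essentially the same idea as the paper's: take shortest even witnesses $p,q$ for $x,y\in T_n\cup S$ and argue that appending the (necessarily unmatched) edge $xy$ to one of them yields a simple odd alternating path, using minimality to rule out the obstructing vertex lying on the shorter path. The only cosmetic difference is that the paper dispatches the argument with a single ``without loss of generality $|p|\le|q|$'' instead of your symmetric chain $2k\le l\le 2j-2\le i-2\le 2k-4$, and leaves the easy $S$-cases implicit.
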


\begin{proof} It is sufficient to prove that either $x$ or $y$ would be in $\tilde{H}_{n+1}$. Let $p$, respectively $q$ be shortest alternating paths that witness $x$ and $y \in T_n$ respectively. We may assume without loss of generality that $|p| \leq |q|$. Then $y$ cannot lie on $p$, otherwise there would either be a shorter alternating path witnessing $y \in T_n$, or we would have $y \in \tilde{H}_n$ and not in $T_n \cup S$. Hence adding the $xy$ edge to $p$ we obtain an alternating path of length at most $2n+1$ that witnesses that $y \in \tilde{H}_{n+1}$.
 \end{proof}

Edges running between $T_n \cup S$ and $B_n$ are more complicated to handle. If $b \in B_n$ and $t \in T_n \cup S$, but all paths witnessing $b \in \tilde{T}_n$ run through $t$, then we can't simply exhibit  $t \in \tilde{H}_{n+1}$ by adding the $bt$ edge to the end of such a path since it would become self-intersecting. The following definition captures this behavior.

\begin{defin}\label{tough_def}~ 
\begin{itemize} \item A vertex $x \in T_n \cup S$ is "tough" if it is adjacent to one or more vertices in $B_n$, but $x \not \in \tilde{H}_{n+1}$. 
\item An edge $xy \in E$ is "tough" if $x \in T_n \cup S, y \in B_n$ and $x$ is a tough vertex.
\end{itemize}
$TT_n$ will denote the set of vertices that are tough at time $n$.
\end{defin}

 We would like to somehow bound the number of tough vertices. In order to do so, we will associate certain subsets of $X_n$ to each tough vertex in a way that subsets belonging to different tough vertices do not intersect. Then we will show that these subsets become large quickly.

\begin{remark} We think of $n$ as some sort of time variable, and all the sets evolve as $n$ changes. Usually $n$ will denote the "current" moment in this process. In the following definitions of age, descendent, and family, there will be a hidden dependence on $n$. When talking about the age or the family of a vertex, we always implicitly understand that it is taken at the current moment. 
\end{remark}

\begin{defin}\label{age_def}
The "age" of a vertex $x \in TT_n$ is $a(x) = n - \min\{k: x \in T_k \cup S\}$.
\end{defin}

\begin{defin}\label{descendent_def} Fix a vertex $x\in TT_n$. A set $D \subset X_n$ has the "descendent property" with respect to $x$ if the following is true. For every $y \in D$ there are two alternating paths $p$ and $q$ starting in $x$ and ending in $y$, such that 
\begin{itemize}
\item both start with an unmatched edge, but $p$ is odd while $q$ is even,
\item $p, q \subset D \cup \{x\}$,
\item $|p| + |q| \leq 2a(x)+1$.
\end{itemize}
\end{defin}

Sets satisfying the descendent property with respect to $x$ are closed under union.

\begin{defin}\label{family_def} The "family" of a vertex $x\in TT_n$ is the largest set $D \subset X_n$ that satisfies the descendent property. In other words it is the union of all sets that satisfy the descendent property. The family of $x$ is denoted by $F_n(x)$.
\end{defin}

\begin{claim}\label{claim:tough_edge} If $x \in TT_n$ and $xy$ is a tough edge then $y$ is in the family of $x$. In particular every tough vertex has a nonempty family. 
\end{claim}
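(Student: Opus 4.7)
The strategy is to exhibit a concrete subset $D \subset X_n$ with $y \in D$ that satisfies the descendent property of Definition~\ref{descendent_def}; the ``in particular'' clause is then immediate because toughness of $x$ forces the existence of a neighbor in $B_n$, and hence of a tough edge. The starting observation is the fundamental consequence of toughness: every even alternating path from $S$ to $y$ lying in $X_n$ of length at most $2n$ must pass through $x$. Otherwise, appending the tough edge $yx$ --- which is necessarily unmatched, since $x$'s match partner (if any) lies in $H_n$, not $B_n$ --- would yield an odd alternating path from $S$ to $x$ inside $X_{n+1}$ of length at most $2n+1$, forcing $x \in \tilde{H}_{n+1}$ and contradicting toughness. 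Fix a shortest such witness $q' = (s_0, s_1, \ldots, s_{2m})$ (so $m \leq n$ and $q' \subset X_n$) and write $x = s_i$. A parity check shows $i$ must be even: an odd $i$ would make the prefix $(s_0, \ldots, s_i) \subset X_n$ an odd alternating path of length $\leq 2n-1$ from $S$ to $x$, forcing $x \in \tilde{H}_n$ and contradicting $x \in T_n \cup S$.

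I now take $D := \{s_{i+1}, s_{i+2}, \ldots, s_{2m}\}$, and for each $s_j \in D$ I supply two alternating paths from $x$ to $s_j$ inside $D \cup \{x\}$: the \emph{forward subpath} $\sigma_j = (s_i, s_{i+1}, \ldots, s_j)$ of length $j - i$, and the \emph{detour} $\delta_j = (s_i, s_{2m}, s_{2m-1}, \ldots, s_j)$ of length $1 + (2m - j)$, which uses the tough edge $xy = s_i s_{2m}$ first and then traverses $q'$ backwards from $s_{2m}$ to $s_j$. Both are simple alternating paths starting with an unmatched edge ($\sigma_j$ since $i$ is even; $\delta_j$ since $xy$ is unmatched and the next edge $s_{2m}s_{2m-1}$ of $q'$ is matched, preserving alternation), and since $(j - i) + (1 + 2m - j) = 1 + 2m - i$ is odd, they carry opposite parities and can play the roles of $p$ and $q$ in Definition~\ref{descendent_def}.

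The final step --- and what I expect to be the main technical obstacle --- is the length bound $|\sigma_j| + |\delta_j| = 1 + 2m - i \leq 2a(x) + 1$, equivalently $i \geq 2(n - a(x))$. The idea is that since $q'$ is a shortest even witness for $y \in \tilde{T}_n$, its prefix $(s_0, \ldots, s_i)$ ought to be a shortest possible witness for $x$ entering $\tilde{T}_{i/2}$, giving $\min\{k : x \in T_k \cup S\} \leq i/2$. In the simple case where all forbidden sets $E_k$ are empty this is immediate from the recursive construction, since every even alternating path in $X_n$ of length $\ell$ automatically lives in $X_{\lceil \ell/2 \rceil}$; in the general case one must inductively verify along $q'$ that the length-$2\ell$ prefix is a valid witness adding the pair $(s_{2\ell-1}, s_{2\ell})$ to $X_\ell$ despite the $E_k$'s --- or, failing that, locally splice in a shortest alternative sub-path avoiding the forbidden entering edges --- which is where the bulk of the technical bookkeeping sits. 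Combined with the obvious $m \leq n$, this yields $2m - i \leq 2a(x)$, so $D$ satisfies the descendent property, and $y = s_{2m} \in D \subset F_n(x)$.
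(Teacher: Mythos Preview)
Your argument is exactly the paper's: take a witness $p$ for $y\in\tilde T_n$, use toughness to force $x$ onto it (else appending $yx$ would put $x$ in $\tilde H_{n+1}$), let $D$ be the suffix of $p$ after $x$, and for each $z\in D$ exhibit the forward sub-path and the detour via the $xy$ edge followed by $p$ backwards. On the age bound you flag as delicate, the paper simply asserts ``the age of $x$ \ldots\ is at least $k-l$'' with no further comment; the implicit reasoning is that the length-$2l$ prefix witnesses $x\in T_l\cup S$, giving $\min\{k:x\in T_k\cup S\}\le l$ and hence $a(x)=n-k_0\ge n-l\ge k-l$ since $k\le n$ --- so your caution about the forbidden-edge case is reasonable, but the paper's own proof treats this point at precisely the same level of detail you do.
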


\begin{proof} Let $p$ be a path that witnesses $y \in \tilde{T}_n$. Now if $p$ appended by the edge $yx$ would be a path then it would witness $x \in \tilde{H}_{n+1}$. Since this is not the case,  $x$ has to lie on $p$. Let $D$ denote the set of vertices $p$ visits after leaving $x$. For any point $z \in D$ there are two alternating paths from $x$ to $z$. One is given by $p$ and the other by going from $x$ to $y$ and then walking backwards on $p$. Suppose $x = p_{2l}$ and $y = p_{2k}$. The total length of these two paths is $2k-2l +1$. Since the age of $x$ by Definition~\ref{age_def} is at least $k-l$ we see that $2k-2l+1 \leq 2a(x)+1$. Hence the two paths satisfy all conditions of Definition~\ref{descendent_def} so $D$ has the descendent property with respect to $x$. Hence by Definition~\ref{family_def} $x$ has a non-empty family, in particular $y$ is in the family. 
 \end{proof}

\begin{claim} The family of any tough vertex is a subset of $B_n$. 
\end{claim}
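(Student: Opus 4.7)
Let $x \in TT_n$ and fix $y \in F_n(x)$. By Definitions~\ref{descendent_def} and~\ref{family_def} there exist alternating paths $p$ (odd) and $q$ (even) from $x$ to $y$, both beginning with an unmatched edge, contained in $F_n(x)\cup\{x\}\subset X_n$, with $|p|+|q|\le 2a(x)+1$; since $|p|\ge 1$ and $|q|\ge 2$ we have $|p|\le 2a(x)-1$ and $|q|\le 2a(x)$. I need to produce, inside $X_n$ and of length at most $2n$, both an even and an odd alternating path from $S$ to $y$; this will witness $y\in\tilde T_n\cap\tilde H_n=B_n$.

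The plan is to prepend a witness path $\pi$ from $S$ to $x$. If $x\in S$ this is vacuous: $p$ and $q$ already play the roles of the required odd and even alternating paths (of lengths $\le 2a(x)-1\le 2n$ and $\le 2a(x)\le 2n$), so $y\in B_n$ immediately. Otherwise $x\in T_n$, and Definition~\ref{age_def} together with the definition of $\tilde T$ supply an even alternating path $\pi$ from some $s\in S$ to $x$ in $X_{n-a(x)}\subset X_n$ of length at most $2(n-a(x))$. Since $\pi$ ends at $x$ with a matched edge while $p$ and $q$ leave $x$ with unmatched edges, the concatenations $\pi\cdot p$ and $\pi\cdot q$ are alternating walks from $S$ to $y$ inside $X_n$ of lengths at most $2n-1$ and $2n$, of odd and even parity respectively.

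The final step is to extract from each walk a genuine simple alternating path of the same parity. I would argue by minimality: pick a shortest alternating walk of the required parity from $S$ to $y$ inside $X_n$ of length at most $2n$ and show it is simple. A repeated vertex at positions of the same parity permits direct excision of the intervening (even-length) closed alternating sub-walk, strictly shortening while preserving overall parity and thus contradicting minimality. A repeated vertex at positions of opposite parity (a ``blossom'') is the delicate case; here one bypasses the odd closed sub-walk by using the fact that the descendent property supplies alternating paths of \emph{both} parities from $x$ to every family member, so at each problematic vertex there is an alternating reroute of the opposite parity available to use. The main obstacle I anticipate is precisely this blossom step, since odd closed alternating walks cannot in general be excised locally in a non-bipartite graph; but the double-parity structure inside $F_n(x)$ provided by the descendent property is exactly the combinatorial input needed to perform the bypass. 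Once both simple alternating paths are obtained, $y \in \tilde H_n \cap \tilde T_n = B_n$.
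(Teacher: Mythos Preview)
Your setup is correct and matches the paper's: take a shortest witness path $\pi$ from $S$ to $x$ (of length $2(n-a(x))$) and concatenate with the descendent-property paths $p,q$ to obtain alternating walks of both parities from $S$ to $y$ of length at most $2n$. The gap is exactly where you say it is---the blossom case---and your proposed resolution does not work as written. The descendent property gives both-parity paths from $x$ to each family member, not from an arbitrary intersection vertex $v$ to $y$; rerouting through $x$ reintroduces $x$ as an interior vertex and can create new intersections, so there is no evident local bypass. More tellingly, your argument never uses the hypothesis that $x$ is \emph{tough} (that is, $x\notin\tilde H_{n+1}$) beyond placing $x$ in $T_n\cup S$. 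Toughness is the missing ingredient.

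The paper does not untangle blossoms at all; it shows they cannot arise. One proves that the shortest witness path $s$ is \emph{disjoint} from $F_n(x)$. If not, let $i$ be minimal with the matched pair $s_{2i-1},s_{2i}$ in the family. The odd descendent-path from $x$ to $s_{2i}$ lies in $F_n(x)$, hence avoids $s_0,\dots,s_{2i-2}$ (which are not in the family by minimality of $i$); reversing it and appending it to $s_0\cdots s_{2i}$ gives an alternating path from $S$ to $x$ ending in an unmatched edge of length at most $2i + (2a(x)+1)\le 2n+1$, so $x\in\tilde H_{n+1}$, contradicting toughness. Once $s\cap F_n(x)=\emptyset$ is established, the concatenations $s\cdot p$ and $s\cdot q$ are automatically simple alternating paths in $X_n$ of lengths at most $2n-1$ and $2n$, and your length computation finishes the proof of $y\in B_n$.
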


\begin{proof} Let $x \in TT_n$ be a tough vertex and let $s$ be a shortest path witnessing $x \in T_{n} \cup S$. Let us denote $|s| = 2k$. It is enough to show that the family of $x$ is disjoint from $s$. Indeed, then for any point $y$ in the family we can take the two types of paths $p,q$ as in Definition~\ref{descendent_def} from $x$ to $y$. By the age requirement in Definition~\ref{descendent_def} we get that $|p| + |q| \leq 2a(x) + 1 = 2n - 2k +1$.  Hence $|s| + |p| + |q| \leq 2n+1$ and thus $|s|+|p| \leq 2n-1$ and $|s| + |q| \leq 2n$. Since these paths run within the family which is disjoint from $s$, we can append $s$ with ${p}$ and ${q}$ respectively to get alternating paths witnessing $y \in \tilde{H}_n$ and $y \in \tilde{T}_n$ respectively. 

Now suppose the family of $x$ is not disjoint from ${s}$. It is clear that any family consists of pairs of matched vertices. Let $i$ be the smallest index such that the pair $s_{2i-1}, s_{2i}$ is in the family. Then from $x$ there is an odd alternating path ${p}$ to $s_{2i}$ by Definition~\ref{descendent_def}  that runs within the family and its length is at most $2a(x)+1 \leq 2n-2k+1$. Since $i$ was the smallest such index, the path ${p}$ is disjoint from $s_0, s_1, \dots s_{2i-1}$. Thus by appending $s_0, s_1,\dots,s_{2i}$ by the reverse of ${p}$ we get an alternating path from an unmatched point to $x$ ending in an unmatched edge, whose length is at most $2n-2k+1 + 2i \leq 2n+1$. This path witnesses $x \in \tilde{H}_{n+1}$, contradicting the toughness of $x$.  
 \end{proof}

Next we will prove that any vertex can belong to at most one family. We start with a simple lemma about concatenating alternating paths.

\begin{lemma} Let ${p}$ be an even alternating path from $x$ to $y$ and ${q}$ an odd alternating path from $y$ to $z$. Then there is an odd alternating path from $x$ to either $y$ or $z$ whose length is at most $|p| + |q|$. 
\end{lemma}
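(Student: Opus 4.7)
The plan is to splice $p$ and $q$ into a single simple alternating path via their first common vertex. First form the walk $w=p\cdot q$ from $x$ to $z$. Since $p$ has even length it ends at $y$ with a matched edge, and $q$ begins at $y$ with an unmatched edge, so $w$ is alternating of odd length $|p|+|q|$. If $p$ and $q$ share only the vertex $y$, then $w$ is already a simple odd alternating path from $x$ to $z$ and we are done.

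Otherwise let $i$ be minimal with $p_i\in q$, and write $v=p_i=q_j$. By the minimality of $i$, the prefix $p_0,\ldots,p_{i-1}$ is disjoint from $q$, so the two spliced paths
\[
\alpha=(p_0,\ldots,p_i=q_j,q_{j+1},\ldots,q_{|q|}=z)\quad\text{and}\quad \beta=(p_0,\ldots,p_i=q_j,q_{j-1},\ldots,q_0=y)
\]
are both simple (using simplicity of $q$), and each has length at most $|p|+|q|$. Their lengths $i+|q|-j$ and $i+j$ differ by the odd number $|q|-2j$, so exactly one of them is odd.

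The only real content is verifying that the odd-length candidate is also the alternating one. Both $\alpha$ and $\beta$ start with the unmatched edge $p_0p_1$ and alternate along the shared $p$-prefix; the incoming edge $p_{i-1}p_i$ is matched iff $i$ is even. Because $q$ alternates at $v$, the two $q$-edges at $v$ have opposite types, so exactly one of $\alpha,\beta$ continues at $v$ with the type opposite to $p_{i-1}p_i$, and is therefore globally alternating. A brief parity check then shows that $\alpha$ is alternating precisely when $i+j$ is even, which is exactly when $i+|q|-j$ is odd; symmetrically $\beta$ is alternating precisely when $i+j$ is odd, which is exactly when its length $i+j$ is odd. Hence the simultaneous parity match occurs automatically, and one obtains an odd alternating path from $x$ to $y$ or to $z$ of length $\le |p|+|q|$. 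The trivial edge cases ($i=0$ if $x$ already lies on $q$; $j=0$ or $j=|q|$ when $v$ is an endpoint of $q$) are handled analogously and present no substantial obstacle.
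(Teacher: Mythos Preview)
Your proof is correct and follows essentially the same approach as the paper: take the first vertex $p_i$ of $p$ that lies on $q$, form the two spliced paths $\alpha$ (continuing forward on $q$ to $z$) and $\beta$ (continuing backward on $q$ to $y$), and argue that the alternating one among them is odd. The only difference is cosmetic: the paper notes that both $\alpha$ and $\beta$ end in a non-matched edge (the last and first edge of $q$, respectively), so whichever is alternating automatically has odd length, whereas you carry out the parity bookkeeping explicitly.
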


\begin{proof} If the concatenation of ${p}$ and ${q}$ is a path, then we are done. Otherwise let $i$ be the smallest index such that $p_i \in {q}$. Let $p_i = q_j$. Then $p_0,p_1,\dots, p_i = q_j, q_{j+1}, \dots, \last(q)$ is a path from $x$ to $z$ and $p_0,p_1,\dots,p_i=q_j, q_{j-1},\dots q_0$ is a path from $x$ to $y$. Both have length at most $|p|+|q|$, both of them end with non-matched edges and one of them is clearly alternating. 
 \end{proof}

\begin{claim}\label{disjointfamilies} Two families cannot intersect.
\end{claim}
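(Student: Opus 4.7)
The plan is to derive a contradiction by exhibiting, for at least one $i \in \{1,2\}$, an odd alternating path of length at most $2n+1$ from $S$ to $x_i$ lying entirely inside $X_{n+1}$. Such a path would witness $x_i \in \tilde{H}_{n+1}$ and therefore contradict Definition~\ref{tough_def}. So suppose $y \in F_n(x_1) \cap F_n(x_2)$ with $x_1 \ne x_2$; without loss of generality assume $a(x_1) \ge a(x_2)$.

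For each $i \in \{1,2\}$ I fix, by Definition~\ref{descendent_def}, an odd alternating path $p_i$ and an even alternating path $q_i$ from $x_i$ to $y$, both starting with an unmatched edge, both contained in $F_n(x_i) \cup \{x_i\}$, and satisfying $|p_i|+|q_i| \le 2a(x_i)+1$. Let $s_1$ be a shortest alternating path from $S$ to $x_1$; by Definition~\ref{age_def} it has length exactly $2(n-a(x_1))$, ends at $x_1$ with a matched edge, and lies in $X_{n-a(x_1)} \subseteq X_n$. Adding the descendent length bounds yields
\[ (|q_1|+|p_2|) + (|p_1|+|q_2|) \;\le\; 2a(x_1)+2a(x_2)+2 \;\le\; 2(2a(x_1)+1), \]
so at least one of $|q_1|+|p_2|$ or $|p_1|+|q_2|$ is bounded by $2a(x_1)+1$.

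The key construction is to concatenate $s_1$ with the corresponding $x_1$-to-$y$-to-$x_2$ detour: either $q_1$ followed by the reverse of $p_2$, or $p_1$ followed by the reverse of $q_2$. A short check of edge types at the two junctions (matched meeting unmatched in each case) confirms that the concatenation is an alternating walk from some $u \in S$ to $x_2$, starting and ending with unmatched edges; its total length is at most $2(n-a(x_1))+(2a(x_1)+1) = 2n+1$; and all of its vertices lie in $X_n \cup \{x_1,x_2\} \subseteq X_{n+1}$.

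It remains to convert this alternating walk into an alternating simple path of the same length. I would iterate the concatenation lemma preceding this claim, applying it first to the two alternating pieces meeting at $y$ in order to replace the detour by a simple odd alternating path from $x_1$ to either $y$ or $x_2$, and then applying it again to $s_1$ and the resulting path. The main obstacle is that the concatenation lemma provides no control over which of its two possible endpoints we actually land on: the shortcut may redirect to the intermediate vertex $y$ instead of to $x_2$, in which case we would not yet have a contradiction. To address this I would run the argument in parallel for both pairings $(q_1,p_2)$ and $(p_1,q_2)$ and argue, using the parity analysis that underlies the proof of the concatenation lemma, that at least one of the two attempts must terminate at $x_1$ or $x_2$, producing the required odd alternating simple path inside $X_{n+1}$ and hence the required contradiction with the toughness of $x_1$ or $x_2$.
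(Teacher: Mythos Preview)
Your proposal has a genuine gap, and you have in fact already put your finger on it: when you apply the concatenation lemma to the pair $(q_1,\text{reverse }p_2)$, the resulting odd alternating path may terminate at the intersection vertex $y$ rather than at $x_2$. Since $y\in B_n$ is not tough, landing on $y$ gives no contradiction. Your proposed fix --- running both pairings $(q_1,p_2)$ and $(p_1,q_2)$ in parallel and appealing to ``the parity analysis that underlies the proof of the concatenation lemma'' --- does not work, for two independent reasons. First, there is nothing in that parity analysis preventing \emph{both} applications from redirecting to $y$: in each case the lemma chooses, at the first intersection vertex, whichever of the two continuations happens to be alternating, and there is no relationship between the two applications that forces at least one to reach a tough endpoint. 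Second, your averaging argument only guarantees the length bound $2a(x_1)+1$ for \emph{one} of the two pairings; the other could have length up to $4a(x_1)+2-\bigl(|q_1|+|p_2|\bigr)$, which after adding $|s_1|=2(n-a(x_1))$ may well exceed $2n+1$. So even if the second pairing did land on $x_2$, the resulting path could be too long.

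The paper's proof avoids this difficulty by a different construction that never risks landing on a non-tough vertex. Instead of routing through an arbitrary intersection point, it takes $p'$ to be a \emph{shortest} alternating path from $x_1$ into $F_n(x_1)\cap F_n(x_2)$ lying inside $F_n(x_1)$; by minimality, $p'$ meets $F_n(x_2)$ only at its endpoint $x'$. From $x'$ one then takes one of the two descendent paths $s,t$ to $x_2$ inside $F_n(x_2)\cup\{x_2\}$, chosen to make $p'\cup s$ alternating. Because $p'\setminus\{x'\}$ is disjoint from $F_n(x_2)\cup\{x_2\}$, the concatenation $p'\cup s$ is already a \emph{simple} odd alternating path from $x_1$ to $x_2$. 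A single application of the concatenation lemma to the witness path for $x_1\in T_n\cup S$ and $p'\cup s$ then yields an odd alternating path from $S$ to either $x_1$ or $x_2$ --- and both of those are tough, so either outcome is a contradiction. The length bound follows from the minimality of $p'$ and of the analogous path $q'$ on the $x_2$ side: $|p'|\le|q'|+|s|+|t|-\cdots$ is replaced by the chain $|p|+|p'|+|s|\le|q|+|q'|+|s|\le|q|+|t|+|s|\le|q|+2a(x_2)+1=2n+1$. The idea you are missing is precisely this shortest-path-to-the-intersection trick, which guarantees simplicity of the $x_1$-to-$x_2$ path and thereby ensures both possible outputs of the concatenation lemma are tough vertices.
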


\begin{proof} Let $x, y \in TT_n$ be two tough vertices. Assume their families $F$ and $G$ do intersect. Let ${p}, {q}$ be shortest alternating paths witnessing $x, y \in T_n \cup S$. Let us choose the shortest among all alternating paths from $x$ to $F \cap G$ that runs within $F$. Let this path be ${p'}$ and its endpoint $x' \in F \cap G$. Do the same thing with $y$ to get a path ${q'}$ from $y$ to $y' \in F \cap G$ lying within $G$. By symmetry we may assume that $|p| + |p'| \leq |q| + |q'|$. 

By the choice of ${p'}$ we see that the only point on ${p'}$ that is in $G$ is its endpoint $x'$. From $x'$ there are two paths, ${s}$ and ${t}$,  leading to $y$ within $G$ by Definition~\ref{descendent_def} one of which, say ${s}$, can be appended to ${p'}$ to get an alternating path from $x$ to $y$. This path ${p'} \cup {s}$ clearly starts and ends with a non-matching edge.

Now we are in a situation to apply the previous lemma. ${p}$ leads from $p_0$ to $x$ and ends with a matching edge, and ${p'} \cup {s}$ leads from $x$ to $y$ and starts and ends with non-matching edges. Thus by the lemma, there is an alternating path from $p_0$ to either $x$ or $y$ which ends with a non-matching edge. The length of this alternating path is at most $|p| + |p'| + |s|$. 
But by the choice of ${p'}$, the choice of ${q'}$, and by the age requirement in Definition~\ref{descendent_def} we have 
\[ |p| + |p'| + |s| \leq |q| + |q'| + |s| \leq |q| + |t| + |s| \leq |q| + 2a(y) + 1 = 2n+1.\]
Thus the alternating path we have found from $p_0$ to $x$ or $y$ has length at most $2n+1$ so it witnesses $x \in \tilde{H}_{n+1}$ or $y \in \tilde{H}_{n+1}$. But neither is possible since both $x$ and $y$ are tough, which is a contradiction.
 \end{proof}

\begin{corol}\label{onlyonetough} There is exactly one tough vertex adjacent to any family.
\end{corol}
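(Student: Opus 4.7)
The plan is to derive this corollary as an immediate consequence of Claim~\ref{claim:tough_edge} (which tells us where tough edges land) and Claim~\ref{disjointfamilies} (disjointness of distinct families). Fix a family $F = F_n(x)$ where $x \in TT_n$.

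For existence, I would first argue that $x$ itself is adjacent to $F$. By Definition~\ref{tough_def}, since $x$ is tough, it has at least one neighbor $y \in B_n$, and the edge $xy$ is by definition a tough edge. Then Claim~\ref{claim:tough_edge} gives $y \in F_n(x) = F$, so $x$ is adjacent to its own family.

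For uniqueness, suppose for contradiction that some tough vertex $y \in TT_n$ with $y \neq x$ is also adjacent to $F$, say via an edge $yz$ with $z \in F$. Since $F \subset B_n$ (as shown in the preceding claim) and $y \in TT_n \subset T_n \cup S$ is tough, the edge $yz$ satisfies the conditions of being a tough edge in Definition~\ref{tough_def}. Applying Claim~\ref{claim:tough_edge} to $y$ and the tough edge $yz$, we conclude $z \in F_n(y)$. Therefore $z \in F_n(x) \cap F_n(y)$, which contradicts Claim~\ref{disjointfamilies}. Hence $x$ is the unique tough vertex adjacent to $F$.

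There is no real obstacle here: the corollary is essentially a repackaging of the two preceding claims, and the only thing to check carefully is that the hypotheses of ``tough edge'' are met for the hypothetical second adjacency, which follows from the inclusion $F \subset B_n$ that was just established.
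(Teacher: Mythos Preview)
Your proof is correct and follows essentially the same route as the paper: use $F_n(x)\subset B_n$ to see that any adjacency from a second tough vertex $y$ is a tough edge, invoke Claim~\ref{claim:tough_edge} to place the endpoint in $F_n(y)$, and contradict Claim~\ref{disjointfamilies}. You additionally spell out the existence part (that $x$ itself is adjacent to $F_n(x)$), which the paper leaves implicit.
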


\begin{proof} Let $x,y \in TT_n$ and $z \in F_n(x)$. Suppose there is an edge between $y$ and $z$.Then $z$ is in $B_n$, hence $yz$ is a tough edge, hence $z$ is in the family of $y$, but then the two families would not be disjoint, which is a contradiction.
 \end{proof}

Let $c_1 = c_1(c_0,d)$ be a constant to be determined later.

\begin{claim}\label{expanding_claim} Suppose $|F_n(x)| < c_1$, $v \in F_n(x)$, and there is an edge $vw$ such that $w \in B_n \setminus F_n(x)$. Then either $w \in F_{n+c_1}(x)$ or $x \in \tilde{H}_{n+c_1}$. In other words, if a vertex remains tough for an extended period of time, then its family consumes its neighbors. 
\end{claim}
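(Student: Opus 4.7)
The opening move is to clarify the status of the edge $vw$. Because $v \in F_n(x)$, Definition~\ref{descendent_def} supplies an even alternating path $q$ from $x$ to $v$ inside $F_n(x) \cup \{x\}$, ending in the matched edge at $v$. If $vw$ were that matched edge, then $w$ (distinct from $x$ since $w \in B_n$ and $x \in T_n \cup S$ are in disjoint sets) would be the penultimate vertex of $q$ and hence in $F_n(x)$, contradicting the hypothesis $w \notin F_n(x)$. So $vw$ is unmatched, and appending it to $q$ yields an odd alternating path $q'$ from $x$ to $w$ of length $|q|+1 \leq 2a(x)+1$ lying inside $F_n(x) \cup \{x,w\}$. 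Since matched pairs in $B_n$ stay together, the matching partner $w'$ of $w$ also lies in $B_n$. This gives the odd half of the descendent structure we need for the pair $\{w,w'\}$; the missing piece is an even alternating path from $x$ to $w$ inside the family.

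I would then argue by contradiction: suppose $x$ remains tough throughout $[n, n+c_1-1]$, equivalently $x \notin \tilde{H}_{n+c_1}$, and $w \notin F_{n+c_1}(x)$. Families are monotone in time, because the descendent property only becomes easier to verify as the age budget relaxes, so $F_n(x) \subset F_{n+t}(x)$ and the odd path $q'$ persists at all intermediate times. The single missing ingredient for $w \in F_{n+t}(x)$ is an even alternating path from $x$ to $w$ lying inside $F_{n+t}(x) \cup \{x\}$, and the allowed length $2a(x)+1-|q'|$ grows by $2$ at every step. The heart of the plan is to show inductively that at each time $t$, either this even path already exists (so that $\{w,w'\}$ enters the family at step $t+1$), or any candidate even path is forced to exit $F_{n+t}(x) \cup \{x\}$ through an edge that plays exactly the role $vw$ played at step $n$; a reprise of the reasoning behind Claim~\ref{claim:tough_edge} then forces the exit endpoint into $F_{n+t+1}(x)$, so the family grows by at least one matched pair.

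The main obstacle is controlling this step-by-step growth sharply. The candidate even alternating path has to be assembled by splicing the path witnessing $w \in \tilde{T}_n$ with the path witnessing $x \in T_n \cup S$ through the current family; potential self-intersections must be resolved by truncation exactly as in Claim~\ref{disjointfamilies}, and the delicate point is that the truncations that would create an odd alternating walk from $S$ to $x$ ending in an unmatched edge directly certify $x \in \tilde{H}$ and settle the claim. The hypothesis $|F_n(x)| < c_1$ then caps the number of strict growth steps possible without $w$ being absorbed: once more than $c_1$ successive growth steps have accumulated, the enlarged family together with the persistent $q'$ and its iterated extensions produces an odd alternating walk from $S$ to $x$ of length at most $2(n+c_1)$, giving $x \in \tilde{H}_{n+c_1}$ and contradicting the standing assumption.
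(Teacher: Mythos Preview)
Your plan misidentifies the role of the hypothesis $|F_n(x)| < c_1$, and the step-by-step growth mechanism you sketch does not terminate in the contradiction you claim. In the paper's argument, $|F_n(x)| < c_1$ is used only to bound the length of alternating paths \emph{inside} the family: any even path from $x$ to a family member has length at most $c_1$, so detouring through the family costs at most $c_1$ extra edges. This is what makes the length bookkeeping close at time $n+c_1$ in a single shot. By contrast, you treat $c_1$ as a cap on how many ``strict growth steps'' can occur before $w$ is absorbed. But nothing in the hypotheses prevents $F_{n+t}(x)$ from growing well beyond $c_1$; the bound is only on $F_n(x)$. And even if the family grew at every step, there is no reason this should produce ``an odd alternating walk from $S$ to $x$''---family growth by itself has nothing to do with $x$ entering $\tilde H$.

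The actual argument runs as follows. Take a shortest even path $p$ witnessing $w \in \tilde T_n$. If $p$ avoided $x$, then the first place $p$ comes adjacent to $F_n(x)$ gives (after a detour through the family of length at most $c_1$) an odd alternating path from $S$ to $x$ of length at most $2n + c_1$, putting $x \in \tilde H_{n+c_1}$. So $p$ must pass through $x$. Let $v'$ be the last vertex of $p$ lying in $F_n(x) \cup \{x\}$ and let $p_2$ be the tail of $p$ from $v'$ to $w$; one checks $|p_2| \le 2a(x)$. Now for any $y \in p_2$ you reach $y$ in two parities: $x \to v'$ inside the family (length at most $c_1$) then forward along $p_2$; or $x \to v$ inside the family (length at most $c_1$), then the edge $vw$, then backward along $p_2$. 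The total length is at most $2c_1 + |p_2| + 1 \le 2(a(x)+c_1)+1$, so $F_n(x) \cup p_2$ satisfies the descendent property at time $n+c_1$ and in particular $w \in F_{n+c_1}(x)$. No induction on $t$ is needed, and the use of $c_1$ is purely as a length bound, not as a step counter.
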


\begin{proof} We can assume that $x \not \in \tilde{H}_{n+c_1}$ since otherwise we are done. Thus $x$ is still tough at the moment $n+c_1$.

First suppose there is a path $p \in \path_e, |p| \leq 2n$ that ends in $w$ and does not pass through $x$. Let $w' \in p$ be the first even vertex on the path that is adjacent to some vertex $v' \in F_n(x)$. Then the initial segment of $p$ up until $w'$ has to be disjoint from $F_n(x)$. By definition, in $F_n(x)$ there has to be an alternating path from $x$ to $v'$ that ends in a matched edge. Extending this path through $w'$ and then the initial segment of $p$, we get an alternating path from $S$ to $x$. Its length is obviously at most $|p| + c_1$, hence $x \in \tilde{H}_{n+c_1/2}$ and consequently in $\tilde{H}_{n+c_1}$, and this is a contradiction.

 That means that any even path from $S$ to $w$ of length at most  $2n$ has to pass through $x$. Let $p$ be the shortest such path. Let $v'$ be the last vertex of $p$ that is in $F_n(x) \cup \{x\}$. The vertex $v'$ divides $p$ into two segments, $p_1$ going from $S$ to $v'$ and $p_2$ from $v'$
to $w$. Then $|p_2| = |p| - |p_1| \le 2n - 2 \min \{ k : x \in T_k \cup S\}
= 2a(x)$, and equality can only happen if $x = v'$. We claim that $p_2$ becomes part of the family at time $n+c_1$.
For any vertex $y \in p_2$ we can either go from $x$ to $v'$ in even steps and then continue along $p_2$, or go from $x$ to $v$ in even steps and continue backwards on $p_2$ to $y$. The total length of these two paths is at most  $c_1 + |p_2| + 1 + c_1 \leq 2(a(x)+c_1) + 1$.  Since at moment $n + c_1$ the age of $x$ is exactly $a(x)+c_1$, the set $F_n(x) \cup p_2$ will satisfy the descendent property, so this whole set, including $w$, will be part of $F_{n+c_1}(x)$.
\end{proof}

\begin{defin}\label{expanding_def} We will say that at moment $n$ the family of the vertex $x \in TT_n$ is \textit{expanding} if  there is an edge $vw$ such that $v \in F_n(x)$ and $w \in B_n \setminus F_n(x)$.
For any $x \in X$, let $e_n(x)$ be the number of moments $m < n$ such that $0 < |F_m(x)| < c_1$ and at moment $m$ the family was expanding. 
\end{defin}

\begin{corol}\label{bounded_en_corol} For any $x\in X$ we have $e_n(x) \leq c_1^2$ independently of $n$. 
\end{corol}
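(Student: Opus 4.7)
The plan is to couple the monotonicity of $F_n(x)$ in $n$ with Claim \ref{expanding_claim} to show that counted moments force the family size to grow strictly after each window of length $c_1$, then deduce the bound from a simple covering argument.

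First I would verify that $F_m(x) \subseteq F_{m+1}(x)$ whenever $x$ stays tough at time $m+1$. Indeed, any $D \subset X_m$ satisfying the descendent property at time $m$ is contained in $X_{m+1}$ (since $X_m \subset X_{m+1}$), and the constraint $|p|+|q| \leq 2a(x)+1$ only becomes looser as the age $a(x)$ increases by one. Hence $D$ still satisfies the descendent property at time $m+1$, so $D \subseteq F_{m+1}(x)$. Consequently, whenever Claim \ref{expanding_claim} furnishes a $w \in F_{m+c_1}(x) \setminus F_m(x)$, we actually obtain $|F_{m+c_1}(x)| \geq |F_m(x)|+1$.

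Then I would carry out the combinatorial counting. Enumerate the counted moments as $m_1 < m_2 < \ldots < m_k$ and greedily extract a sparse subsequence $m_{i_1} < m_{i_2} < \ldots < m_{i_\ell}$ by setting $m_{i_1}=m_1$ and taking $m_{i_{j+1}}$ to be the first counted moment at distance at least $c_1$ from $m_{i_j}$. By construction every counted moment lies in one of the $\ell$ windows $[m_{i_j}, m_{i_j}+c_1)$, so $k \leq \ell \cdot c_1$. At each sparse moment $m_{i_j}$ the family is expanding and $|F_{m_{i_j}}(x)|<c_1$, so Claim \ref{expanding_claim} supplies either the termination case $x \in \tilde{H}_{m_{i_j}+c_1}$ (after which $x$ ceases to be tough and no later counted moments can occur, forcing $j = \ell$), or a strict growth $|F_{m_{i_j}+c_1}(x)| \geq |F_{m_{i_j}}(x)|+1$. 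Together with monotonicity and $m_{i_{j+1}} \geq m_{i_j}+c_1$ this gives $|F_{m_{i_j}}(x)| \geq j$ for every $j \leq \ell$. Since counted moments require $|F|<c_1$, we conclude $\ell \leq c_1-1$, and the bound $e_n(x) = k \leq c_1(c_1-1) < c_1^2$ follows.

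The argument is essentially mechanical once Claim \ref{expanding_claim} is in hand, so I do not anticipate a real obstacle. The only point that deserves care is the distinction between "the family is expanding at moment $m$" and "the family actually grows in cardinality by moment $m+c_1$"; the monotonicity step is precisely what bridges these two notions, and without it one could imagine the newly absorbed vertex $w$ replacing some vertex that left the family, which the definitions do not in fact allow.
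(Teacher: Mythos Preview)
Your proof is correct and follows essentially the same approach as the paper: both arguments combine Claim~\ref{expanding_claim} with the (implicit in the paper, explicit in your write-up) monotonicity of $F_n(x)$ to conclude that the family can spend at most $c_1$ counted moments per unit of growth and can grow fewer than $c_1$ times. The only cosmetic difference is that the paper buckets the counted moments by the value of $|F_m(x)|$ while you cover them by greedily chosen time windows of length $c_1$; the resulting bound and the underlying mechanism are identical.
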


\begin{proof} By Claim~\ref{expanding_claim} we know that the number of moments in which an expanding family has a fixed size $k < c_1$ is at most $c_1$. This is because after the first such moment, in $c_1$ time the family either ceases to exist or strictly grows. Thus for each possible size $k$ there are at most $c_1$ moments of expansion, and thus there are at most $c_1^2$ such moments in all.
\end{proof}

\subsection{Invariants of growth}\label{largeoutside_section}

Now we are ready to start the proof of Theorem~\ref{smallXn_theorem}. Let 
\[
I(n) = |X_n| + |B_n| + \frac{1}{2}\int_X e_n(x) dx.
\]
or in the infinite connected case
\[
I(n) = |X_n| + |B_n| + \frac{1}{2}\sum_{x\in X} e_n(x).
\]

\begin{propo}\label{smallXn_propo} Suppose that 
\begin{enumerate}
\item $|X_n| \leq |X \setminus X_n|$, 
\item there are no augmenting paths of length at most $2n-1$ in $X_n$, and 
\item  the number of forbidden edges is $|E_k| \leq d|S|$ for all $0 \leq k < n$,
\item the number of non-forbidden edges leaving $X_k$ is at least $1/(d+1)$ portion of all edges leaving $X_k$ for all $k < n$. 
\end{enumerate} 
 then 
 \[ I(n+1) \geq \left(1+\frac{c_0^3} {128d^3(d+1)^3}\right)I(n).\]
\end{propo}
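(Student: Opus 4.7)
The strategy is to lower-bound each of the three summands of $I(n+1)-I(n)$ via a single case analysis on where the non-forbidden leaving edges of $X_n$ exit. Write $A=S\cup T_n\cup B_n$, so that $X_n = A\sqcup H_n$ with $|T_n|=|H_n|$. By condition~1 (which forces $|X\setminus X_n|\geq 1/2$) and the $c_0$-expander property, there are at least $c_0|X_n|/2$ leaving edges, and condition~4 extracts at least $N := c_0|X_n|/(2(d+1))$ non-forbidden ones. I would split on whether at least $N/2$ of these exit $A$ (Case A) or $H_n$ (Case B).

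In Case A, each non-forbidden exit edge $vw$ with $v\in A$ extends the witnessing even alternating path to $v$ (available by Remark~\ref{remark:forbidden}) by the unmatched edge $vw$, so that $w$'s matching partner joins $X_{n+1}$ via Definition~\ref{xk_definition}. Accounting for at most $2d$ such edges per new pair yields $|X_{n+1}|-|X_n|\geq N/(2d)$, which alone dominates the required growth.

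In Case B, $|T_n|=|H_n|\geq N/(2d)$. A degree balance between $A$ and $H_n$,
\[ 2[E(A,A)-E(H_n,H_n)] = d(|S|+|B_n|) + E(H_n,X\setminus X_n) - E(A,X\setminus X_n), \]
combined with the case hypothesis and condition~4, forces $\Omega(N/d)$ internal edges in $A$ incident to $T_n\cup S$. By Lemma~\ref{TTedge_lemma} and Definition~\ref{tough_def}, each non-tough vertex of $T_n$ touching one of these edges enters $B_{n+1}\setminus B_n$, so $|B_{n+1}|-|B_n|\geq N/d^2-|TT_n|$. To absorb the $|TT_n|$ correction, I would invoke the family machinery: Claims~\ref{claim:tough_edge} and~\ref{disjointfamilies} give disjoint nonempty families in $B_n$, so at most $|B_n|/c_1$ tough vertices have family of size $\geq c_1$. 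For the rest, either the family is currently expanding (immediately incrementing $e_{n+1}(x)-e_n(x)$) or Claim~\ref{expanding_claim} forces such an expansion within the next $c_1$ time steps; amortised over a window this contributes $\tfrac12\int(e_{n+1}-e_n)\gtrsim |TT_n|/c_1$. Choosing $c_1=\Theta(d^{3/2}/c_0)$ balances the three terms and yields $I(n+1)-I(n)\gtrsim c_0|X_n|/(d^2(d+1))$; dividing by the bound $I(n)\leq (2+c_1^2/2)|X_n|$ from Corollary~\ref{bounded_en_corol} then produces the stated factor $1+c_0^3/(128 d^3(d+1)^3)$.

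The main obstacle is the Case B degree identity: the negative $E(A,X\setminus X_n)$ term can in principle reach $dN$ from forbidden leaving edges, threatening to swamp the positive contribution $E(H_n,X\setminus X_n)\geq N/2$. Overcoming this requires coupling condition~3 ($|E_k|\leq d|S|$) with condition~4 to show the forbidden edges cannot simultaneously exhaust both the $A$-exit budget and the $d(|S|+|B_n|)$ slack in the identity. A secondary subtlety is that the amortised lower bound on $e_{n+1}-e_n$ is really over a window rather than a single step, so one either needs a structural reason that "stagnant" tough vertices with small closed families force growth in another term, or a more careful telescoping of the invariant that lets a window-average be charged to the current step.
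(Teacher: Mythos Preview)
Your proposal has a genuine gap in the treatment of tough vertices. You split the small-family tough vertices into those whose family is currently expanding (these correctly increment $e_{n+1}-e_n$) and those whose family is not. For the latter you write that ``Claim~\ref{expanding_claim} forces such an expansion within the next $c_1$ time steps'', but this misreads the claim: Claim~\ref{expanding_claim} has as \emph{hypothesis} the existence of an edge from $F_n(x)$ into $B_n\setminus F_n(x)$, i.e.\ that the family is expanding. If it is not expanding, the claim gives you nothing, and no window-averaging or telescoping can rescue a single-step inequality of the form $I(n+1)\geq (1+\delta)I(n)$: the proposition really is a pointwise statement in $n$, and a tough vertex with a small closed family contributes zero to all three increments $|X_{n+1}|-|X_n|$, $|B_{n+1}|-|B_n|$, $\int(e_{n+1}-e_n)$.

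What you are missing is the one hypothesis you never invoke: admissibility, i.e.\ that every odd set of size $\geq 3$ has at least $d+1$ boundary edges. The paper applies this to the odd set $\{x\}\cup F_n(x)$ for each non-expanding small-family tough vertex $x$ (its set $TG$). If $k=|E(x,F_n(x))|$, then $d-k$ boundary edges leave from $x$ and hence at least $k+1$ leave from $F_n(x)$; since the family is non-expanding and $x$ is the only adjacent tough vertex (Corollary~\ref{onlyonetough}), these $k+1$ edges must land in $H\cup TM\cup O$. Integrating gives $|E(B,TG)|\leq |E(B,H\cup TM\cup O)|$, which is exactly the mechanism that ``reflects'' the otherwise useless $TG$-to-$B$ edges into edges that feed growth of $X_{n+1}$ or $B_{n+1}$. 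Your own hint that ``stagnant tough vertices with small closed families force growth in another term'' is precisely this, but you do not supply the argument.

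A secondary remark: the paper does not split into your Cases A/B at all. It writes a single degree-balance inequality between $T\cup S$ and $H$, carries the $d|S|$ surplus on the left, and later uses condition~3 to absorb the forbidden edges against this very $d|S|$ term. This sidesteps the ``main obstacle'' you flag (forbidden $A$-exit edges swamping the balance) without any delicate coupling of conditions~3 and~4; condition~4 is used only once, to pass from $|E(X_n,O_n)\setminus E_n|$ to $|E(X_n,O_n)|/(d+1)$.
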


\begin{proof}
In the following we shall omit the index $n$ from all our notation, except where it would lead to confusion. Let $TT$ denote the set of tough and $TM$ the set of not-tough vertices within $T \cup S$. The tough vertices are further classified according to their families. $TB$ denotes the tough vertices whose families have size at least $c_1$.  For tough vertices with smaller families, $TE$ shall denote the ones that have expanding families and $TG$ denote the rest. So 
\[ S \cup T = TM \cup TT = TM \cup \left(   TB \cup TE \cup TG \right).\]

First let's take a tough vertex $x \in TG$ whose family is small and not expanding. Let $|E(x, F(x))| = k$. By the assumption on the size of the smallest real odd cut we know that the number of edges leaving $x \cup F(x)$ (which is a set of odd size!) is at least $d+1$. But only $d-k$ of these are adjacent to $x$, so at least $k+1$ have to be adjacent to $F(x)$. None of these edges can lead to $B$ because this is a non-expanding family. Also non of these edges can lead to $TT$  by Corollary~\ref{onlyonetough}. Hence all these edges have to go to $H$, $TM$, or the outside world $O = X^c$. This means that 
\begin{equation}\label{eq:xFx} |E(F(x), TG)| = |E(F(x),x)| \leq |E(F(x), H \cup TM \cup O)|.\end{equation}
By Claim~\ref{claim:tough_edge} we see that any edge between $TG$ and $B$ has to run between a vertex in $TG$ and a member of its family. Thus integrating~(\ref{eq:xFx}) over $x\in TG$ and using that families are pairwise disjoint subsets in $B$ we get that 
\[ |E(B, TG)| \leq |E(B, H \cup TM \cup O)| \]

For any other tough vertex we bound the number of edges between it and $B$ by the trivial bound $d$. Adding this to the previous equation we get
\begin{equation} |E(B, TT)|  \leq d|TB| + d |TE| + |E(B, H \cup TM  \cup O)|
  \label{TT_eq} \end{equation}
  
We know that $|T| = |H|$ because of the matching, so the total degrees of $S \cup T$ is $d|S|$ more than the total degree of $H$. The edges between $T \cup S$ and $H$ contribute equally to these total degrees. In the worst case there are no internal edges in $H$. This boils down to the following estimate.
\begin{multline*}
 |E(H, O)| + |E(H, B)| + d|S| \leq \\ \leq 2|E(T\cup S,T\cup S)| + |E(T \cup S,O)| + |E(TM, B)| + |E(TT, B)|.
\end{multline*}
Combining it with (\ref{TT_eq}), and subtracting $|E(H,B)|$ from both sides we get
\begin{multline*} 
|E(H,O)| + d|S| \leq 2|E(T\cup S,T\cup S)| + 2|E(B, TM)| +\\+ |E(B \cup T \cup S, O)| + d|TB| + d|TE|. \end{multline*}
Any vertex in $TB$ has a family of size at least $c_1$, and all these are disjoint by Claim~\ref{disjointfamilies} and contained in $B$. Thus we get that $|TB| \leq |B|/c_1$. Using this and adding $|E(B\cup T \cup S,O)|$ to both sides implies
\begin{multline}\label{XO_eq} 
|E(X_n,O)| + d|S| \leq 2|E(T \cup S,T\cup S) + 2|E(B,TM)| + \\ + 2|E(B\cup T \cup S,O)| + \frac{d}{c_1} |B| + d|TE|.
\end{multline}
Any vertex in $O_n$ that is adjacent to $B_n \cup T_n \cup S$ along an edge not in the forbidden set $E_n$  is going to be in $X_{n+1}$, hence 
\[ |E(B_n \cup T_n \cup S, O_n) \setminus E_n| \leq d(|X_{n+1}| - |X_n|).\]
By definition, any vertex in $TM_n$ that is adjacent to an edge coming from $B_n$ will be part of $B_{n+1}$ or yield an augmenting path. Also, by Lemma~\ref{TTedge_lemma}, any edge in $E(S \cup T_n,S\cup T_n)$ has to be adjacent to a point in $|B_{n+1}| \setminus |B_n|$ or yield an augmenting path. This implies that 
\[ 2|E(T,T)| + 2|E(B,TM)|  \leq 2d(|B_{n+1}| - |B_n|).\]
By the 3rd assumption of the proposition we have $|E_n| \leq d|S|$. Plugging all this into (\ref{XO_eq}) we get
\begin{equation} \frac{|E(X_n,O_n) \setminus E_n|}{d} \leq 2(|X_{n+1}| - |X_n|) + 2(|B_{n+1}| - |B_n|) + |TE| + \frac{|B_n|}{c_1} \label{XO2_eq} \end{equation}
By Definition~\ref{expanding_def}, for any vertex $x \in TE_n$ we get $e_{n+1}(x) = e_n(x) + 1$, and thus 
\[ \int_X e_{n+1}(x)dx = \int_X e_n(x)dx + |TE|.\]
Hence the right hand side of (\ref{XO2_eq}) is exactly $2(I(n+1)-I(n)) + |B_n|/c_1$. Furthermore by the 4th and 1st assumptions of the proposition we have  
\[|E(X_n,O_n) \setminus E_n| \geq  \frac{|E(X_n, O_n)|}{d+1} \geq \frac{c_0 |X_n|(1-|X_n|)}{d+1} \geq \frac{c_0}{2d+2} |X_n|\] in the measurable case and 
\[|E(X_n,O_n) \setminus E_n| \geq  \frac{|E(X_n, O_n)|}{d+1} \geq \frac{c_0 |X_n|}{d+1} \geq \frac{c_0}{2d+2} |X_n|\] 
in the connected infinite case. So in either case we get 
\[
\frac{c_0}{4d(d+1)} |X_n| - \frac{|B_n|}{2c_1} \leq I(n+1) - I(n)
\]

Now we can complete the proof of the proposition. First, choose $c_1 = 4d(d+1)/c_0$. Then, since $|B_n| \leq |X_n$ we get 
\[\frac{|X_n|}{2c_1} \leq I(n+1) - I(n).\]
On the other hand, we know from Corollary~\ref{bounded_en_corol} that $e_n(x) \leq c_1^2$. Obviously $e_n(x) = 0$ if $x \in O_n$. Thus $\int_X e_n(x) dx \leq c_1^2|X_n|$. Hence 
\[ I(n) \leq \left(2+ \frac{c_1^2}{2}\right)|X_n| \leq  c_1^2 |X_n| \leq 2c_1^3 (I(n+1) - I(n)). \]
Substituting $c_1 =   4d(d+1)/c_0$ finishes the proof.
 \end{proof}

This proposition implies that $I(n)$ grows exponentially fast. But as we have seen, $|X_n|$ can be bounded from below in terms of $I(n)$. This will imply fast growth of $|X_n|$ too.

\begin{proof}[Proof of Theorem~\ref{smallXn_theorem}]~

Since $S \subset X_0$, we have $|S| \leq I(0)$. Then again by Corollary~\ref{bounded_en_corol} we have $I(n) \leq c_1^2|X_n|$. So by Proposition~\ref{smallXn_propo} 
\[ X(n) \geq \frac{I(n)}{c_1^2} \geq \frac{|S|}{c_1^2} \left(1+\frac{c_0^3} {128d^3(d+1)^3}\right)^n \]
Substituting $c_1 = 4d(d+1)/c_0$ we get the desired result. 
\end{proof}

In the measurable case, when $X_n$ becomes large, the method apparently breaks down. The main problem is that expansion guarantees only  $c_0|X_n| (1-|X_n|)$ edges between $X_n$ and $O_n$. When $X_n$ is large, the $1-|X_n|$ term will be the dominant. It was crucial to choose $c_1$ so that the $|B_n|/c_1$ terms becomes comparable to the lower bound coming from expansion. But for large $B_n$, hence small $1-|X_n|$,  this cannot be done with a constant $c_1$. The smallest $c_1$ that has a chance to work is roughly on the scale of $1 / \ep$. But then the upper bound for $I(n)$ becomes $(1/\ep)^3$ and all of a sudden the time needed for $X_n$ to exceed $1-\ep$ becomes super-linear in $1/\ep$  instead of the desired poly-logarithmic dependence.

This loss of time comes from the part where we argued that any family grows bigger than $c_1$ in $c_1^2$ time. This observation was sufficient for a constant $c_1$, but is clearly insufficient when $c_1 \approx 1/\ep$. In this part we will show that, in fact, families grow much faster than what Claim~\ref{expanding_claim} asserts.  It turns out that in a sense families grow exponentially, hence it takes much less time than $(1/\ep)^2$ to reach a size of $1/\ep$. This will allow us to "fix" the argument in Section~\ref{largeoutside_section}.

\subsection{Family business}\label{section:familybusiness}

In this section we shall examine in detail the lifecycle of a family.  Let us fix a vertex $x \in X$. At some $n_0$, this $x$ may become an element of $T_{n_0}$. Then later it may start to have neighbors in $B_{n_1}$ (for a larger value $n_1 \geq n_0$). At this point it can become tough and start to have a family. This family grows in time, until at some even larger value of $n$ the vertex finally becomes part of $B_n$. We want to understand the part when $x$ becomes tough and its family starts growing. 

To this end we shall recursively define a sequence of "special moments" 
\[ n_0 \leq n_1 < n_2 <  n_3\dots \] and an increasing sequence of sets 
\[\emptyset = FX_0 \subset FX_1 \subset FX_2 \subset FX_3 \dots \] that control how fast the family grows. The definition is rather complicated, so we present it step-by-step, along with the notation. For any $n_i \leq n < n_{i+1}$ we write $c(n) = i$, and think of it as a counter. The sets $FX_k$ are going to be defined such that the following hold:
\begin{enumerate}
\item $FX_k$ is the union of some matched pairs of vertices.
\item For any matched edge $vw \subset FX_k$ there is an alternating path $p$ that starts in $x$, lies entirely in $FX_k$, ends with the matched edge (in either direction) and has length at most $2k$. 
\item $FX_{c(n)} \subset F_n(x)$ holds for all $n$ when $x$ is tough, as shown on this scheme of evolution:
\[ n_0 \xrightarrow{ FX_0 = \emptyset} n_1 \xrightarrow{FX_1 \subset F_{n_1}(x)} n_2 \xrightarrow{FX_2 \subset F_{n_2}(x)} n_3 \xrightarrow{FX_3 \subset F_{n_3}(x)} n_4 \dots \]
\end{enumerate}

Suppose we have already fixed $n_k$ and $FX_k$. 

\begin{defin}\label{def:mk}\label{fxk_table} Let $m_k$ denote the smallest moment $m_k > n_k$ in which there are at most $d$ edges leaving  $FX_k \cup \{x\}$ that do not end in $B_{m_k} \setminus FX_k$. Let this set of edges be denoted by $E_k$. Now define $FX_{k+1}$ to be the extension of $FX_k$ by those matched edges in $B_{m_k}$ that can be the last edge of an alternating path of length at most $2k$ starting from $x$, and lying entirely in $FX_k$ except for its last two vertices. 
\begin{multline} FX_{k+1} = FX_k \cup \{v \in B_{m_k} : \exists p \in \path_e (|p| \leq 2k+2; p_0 = x; \\ p_1,\dots,p_{2k} \in FX_k; p_{2k+1} = v \mbox{ or } p_{2k+2} = v\} \end{multline}
\end{defin}

It is clear that this construction satisfies the first two conditions stated just above Definition~\ref{fxk_table}, but there is no reason for $FX_{k+1}$ to be a subset of $F_{m_k}(x)$.  However, if we choose $n_{k+1} = m_k + 2k$ then the following claim implies that the third condition will be also satisfied.

\begin{claim}\label{becomesfamily_claim}
While $x$ is tough, $FX_{k+1} \subset F_{m_k + 2k}(x)$ for all $k$,  hence $FX_{c(n)} \subset F_n(x)$ for all $n$.
\end{claim}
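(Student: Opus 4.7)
The plan is to prove $FX_{k+1} \subset F_{m_k+2k}(x)$ by induction on $k$; the second clause $FX_{c(n)} \subset F_n(x)$ for all $n$ then follows by monotonicity of the family in $n$, since while $x$ stays tough both the ambient set $X_n$ and the allowed total path-length $2a(x)+1$ only grow, so any descent-property witness at time $n_{k+1}$ remains valid at all later $n \in [n_{k+1}, n_{k+2})$.

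The base case $FX_0 = \emptyset$ is vacuous. For the inductive step, assume $FX_k \subset F_{m_{k-1}+2(k-1)}(x) \subset F_{m_k+2k}(x)$ (the second inclusion by the above monotonicity), and fix a matched pair $\{v,w\} \in FX_{k+1}\setminus FX_k$. Definition~\ref{fxk_table} furnishes an even alternating path $\pi$ of length $2k+2$ from $x$ with interior $\pi_1,\ldots,\pi_{2k}\in FX_k$ and last two vertices $\pi_{2k+1}=w$, $\pi_{2k+2}=v$, joined by the newly added matched edge. I would set $q_v := \pi$ and $p_w := \pi|_{[0,2k+1]}$, which immediately supply the even witness for $v$ and the odd witness for $w$, both lying in $F_{m_k+2k}(x)\cup\{v,w\}\cup\{x\}$.

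It remains to produce an odd witness $p_v$ for $v$ (ending with an unmatched edge) inside a descent-property superset $D$ of $F_{m_k+2k}(x)\cup\{v,w\}$; the missing even witness $q_w$ is then obtained from $p_v$ by appending the matched edge $vw$. The core idea is to exploit $v\in\tilde H_{m_k}$, which provides an odd alternating path $R$ from $S$ to $v$ ending with an unmatched edge at some predecessor of $v$, together with the even alternating path $s$ from $S$ to $x$ witnessing $x\in T_{n_0}\cup S$. Splicing $s$ and $R$ at their first common vertex, using the standard surgery from the concatenation lemma preceding Claim~\ref{disjointfamilies}, yields an odd alternating path from $x$ to $v$ of controlled length. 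The budget $|p_v|+|q_v| \leq 2a(x, m_k+2k)+1 = 2(m_k+2k-n_0)+1$ carries exactly the $4k$ of extra slack introduced by defining $n_{k+1}=m_k+2k$ rather than $n_{k+1}=m_k$, which is what makes the arithmetic close once $k$ is large enough to have $FX_{k+1}$ nontrivial.

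The main technical obstacle I expect is ensuring that all intermediate vertices of the surgered path $p_v$ can be absorbed into a descent-property set $D$ together with $F_{m_k+2k}(x)\cup\{v,w\}$, as required by the condition $p,q\subset D\cup\{x\}$ in Definition~\ref{descendent_def}. The plan is to enlarge $D$ by those intermediate vertices and verify the descent property by combining three ingredients: the descent structure already present in $F_{m_k+2k}(x)$, the alternating path $s$ from $S$ to $x$ used to reach the surgery vertex, and the inductive characterization $FX_j \subset F_{n_j}(x)$ applied at earlier stages to supply witnesses for previously-added vertices. Once $D$ is shown to satisfy the descent property, the maximality of $F_{m_k+2k}(x)$ forces $\{v,w\}\subset F_{m_k+2k}(x)$ and closes the induction.
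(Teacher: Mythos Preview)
Your inductive setup and the observation that $\pi$ already supplies the even witness for $v$ and the odd witness for $w$ are correct. The gap is in the construction of the odd witness $p_v$, and it is not merely technical.

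First, the splicing you describe cannot produce a path of the right type. The descent property requires paths from $x$ that \emph{start with an unmatched edge}. The witness $s$ for $x \in T_{n_0}$ ends at $x$ with a matched edge, so walking backwards along $s$ from $x$ begins with a matched edge; concatenating with a segment of $R$ cannot fix this parity defect at $x$. The concatenation lemma before Claim~\ref{disjointfamilies} does not apply either: it requires its first path to be an alternating path in the paper's sense (starting with an unmatched edge), and $s$ reversed is not of this form. Moreover, even if the parity obstruction were overcome, the length budget does not close: a splice of $s$ and $R$ can have length up to $|s|+|R| \le 2n_0 + (2m_k-1)$, whereas after subtracting $|q_v| = 2k+2$ from the budget $2(m_k+2k-n_0)+1$ you need $|p_v| \le 2m_k + 2k - 2n_0 - 1$; this would force $2n_0 \le k$, which is false in general.

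Second, the part you flag as ``the main technical obstacle'' \textemdash\ absorbing the intermediate vertices of $p_v$ into a set with the descent property \textemdash\ is in fact the entire content of the proof, and your plan to ``enlarge $D$ and verify'' is not a mechanism. The paper does something quite different from building $p_v$ directly. It takes the shortest \emph{even} path from $S$ ending in the matched edge in the order $w,v$ (i.e.\ witnessing $v \in \tilde T_{m_k}$, not $\tilde H_{m_k}$), argues via toughness of $x$ that this path must pass through $x$, and lets $q$ be the portion from $x$ onward. Now both $p=\pi$ and $q$ are alternating paths from $x$, starting with unmatched edges, ending in the same matched edge $vw$ but traversed in opposite directions, with $|p| \le 2k+2$ and $|q| \le 2a(x)$. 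The real work is then a separate combinatorial lemma (Lemma~\ref{intertwinedpaths_lemma}) which, given two such paths that may be arbitrarily intertwined, produces a specific subset $U \subset q$ containing $v,w$ such that every $z \in U$ has both-parity witnesses lying in $U \cup p$ with total length at most $|q|+2|p|-3$. It is this $U$, adjoined to $F_{m_k}(x)$, that satisfies the descent property at time $m_k+2k$; the $2|p|$ term is precisely what the extra $2k$ in $n_{k+1}=m_k+2k$ pays for. Without an analogue of this lemma your argument cannot be completed.
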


\begin{proof} This is very similar to Claim~\ref{expanding_claim}. We argue by induction on $k$. Then we can assume that $FX_k \subset F_{m_k}$. We need to show that $FX_{k+1} \subset F_{m_k+2k+1}$. Take a matched edge $vw \subset FX_{k+1} \setminus FX_k$. By definition there is an alternating path $p$ of length at most $2k+2$ starting in $x$, ending in the $vw$ edge, and lying in $FX_k$.  Suppose its last vertex is $w$. Since $vw \subset B_{m_k}$, there has to be a path $q$ proving this, ending in the same edge, but in the opposite order: $wv$. Let's take the shortest such path. It has to pass through $x$, otherwise $x$ would not be tough at $n = m_k+k$. Denote the part of this path between $x$ and $v$ by $q$. Now we have two paths from $x$. The path $p$ ends with $vw$ while the path $q$ ends with $wv$. The length of $p$ is at most $2k+2$, the length of $q$ is at most $2a(x)$. We will show that some subset of $q$ together with $F_{m_k}$ satisfies the descendent property at $n = m_k+2k+1$. 

\begin{lemma}\label{intertwinedpaths_lemma} Suppose $p$ and $q$ are alternating paths, both starting with a non-matched edge from the same vertex $x$ and ending in a matched edge $vw$ but from different directions.  Then there is a subset $U \subset q$ containing $v$ and $w$, such that for each vertex $z \in U$ there are two alternating paths between $x$ and $z$ of different length-parities, lying entirely in $U \cup p$, whose total length is at most $|q| + 2|p| - 3$.
\end{lemma}

Before giving the proof of the lemma, let us show how this completes the proof of the claim. It is easy to see that $U \cup F_{m_k}$ satisfies the descendent property at time $m_k+2k$. First of all, by definition, the set $F_{m_k}$ itself satisfies it. On the other hand for any vertex in $U$ the lemma guarantees the existence of the two alternating paths lying entirely in $U \cup p \subset U \cup F_{m_k}$, since $p \subset F_{m_k}$ by induction. The sum of the length of these two paths  is at most $2a(x) + 2(2k+2) - 3 = 2a(x) + 4k +1$. The age of $x$ at $n = m_k+2k$ is $a(x) + 2k$ and so we are done. This completes the proof of the induction step, hence the claim is true.
\end{proof}

\begin{proof}[Proof of Lemma~\ref{intertwinedpaths_lemma}]
If $p$ and $q$ are disjoint apart from their endpoints, then the statement is obvious with $U = q$, and we even get the stronger upper bound $|q|+|p|$ on the total length of the two paths for any vertex in $U$. If $p$ and $q$ are badly intertwined, we need to be cautious. 
Let $x=q_0, q_1,\dots, q_{2l} = v$ denote the vertices of $q$.  Since both $p$ and $q$ are alternating paths, their intersection is necessarily a union of matched edges. For each matched edge $q_{2i-1} q_{2i}$ the path $p$ may contain this edge, or not. The ones that are contained in $p$ will be called double edges. For each double edge,  $p$ may contain it in the same orientation as $q$ - these will be called good double edges,  or the opposite orientation as $q$ - these will be called bad double edges. 

There are two natural partial orders on the set of matched edges of $p$ and $q$. For two such edges $e$ and $f$ will write $e <_q f$ if $e$ comes before $f$ on the path $q$. We will write $e <_p f$ if $e$ comes before $f$ on $p$. (If one or both of the edges aren't on a given path, they are incomparable in the given order.) Now for any matched edge $e$ on $q$, we define 
\[ Z(e) = \min_{<_p} \{ f : f \geq_q e\}.\]
Note that, since the $q$-maximal edge $vw$ is a double edge, $Z(e)$ is always well-defined. Also note that $Z(Z(e)) = Z(e)$. Next, let 
\[ f = \max_{<_q} \{ e \in q : Z(e) = e \mbox{ is a good double edge}\},\]
and let $x'$ be the vertex of $f$ further away from $x$. If there is no such double edge, then $f$ is not defined, and we just choose $x' = x$. Let $q'$ be the part of $q$ from $x'$ to $v$, let $p'$ be the part of $p$ between $x'$ and $w$, and let $p''$ be the part of $p$ between $x$ and $x'$.
We claim that $U = q' \setminus p$ is a good candidate. 

First of all, observe that $p'' \cap q' = x'$. When $x' = x$ this is obvious. Otherwise it is still true because $Z(f) = f$, which means that any edge in $q'$ is visited by $p$ later than $f$ is visited by $p$. Second, take any matched edge $e \in q'$.  By definition, $f <_q e$. Hence 
\[ f <_q e \geq_q Z(e) = Z(Z(e)),\] so by construction $Z(e)$ has to be a bad double edge. Now we can exhibit the two alternating paths between $x$ and the edge $e$.

 From one direction we can simply reach it by going on $p''$ until  $x'$ and then continuing on $q'$ until we reach $e$. This is a path, since $p'' \cap q' = x'$.  From the other direction, start at $x$ and go on $p''$ to $x'$ and then further on $p'$ until hitting $Z(e)$. Since $Z(e)$ is a bad double edge, we have just visited it in the 'wrong' direction on $q$. So we can now continue on $q$ backwards from $Z(e)$ until we come to $e$. The concatenation of these two segments is still a path, since by definition of $Z(e)$, the part of $p$ between $x$ and $Z(e)$ is disjoint from the part of $q$ between $e$ and $Z(e)$. The total length of the two paths we have just exhibited is at most $2|p''| + |q'| + |p'| -1$. The $-1$ comes from the fact that the $vw$ edge is contained in both $p$ and $q$, but has to be used at most once. Finally $|p'| \geq 2$ thus the total length is at most $|q| + 2|p| -3$.
\end{proof}

Now let's look at the connected component of $x$ denoted by $X'$. It's a (finite or countable) connected $d$-regular $c_0$-expander graph with a partial matching. Let's remove the edge containing $x$ from the matching. Let $S' = \{x\}$ and let $X'_k = \{x\} \cup FX_k$. We have already defined the sets $E_k$ that contain all the edges leaving $X'_k$ not ending in $B_{m_k}$, hence in particular containing the once matched edge coming out of $x$. The sets $X'_k$ were constructed exactly according to the rules of Definition~\ref{xk_definition}. Clearly $|E_k| \leq d|S'|$. But since any odd set, in particular $X'_k$, has at least $d+1$ edges leaving it, of which at most $d$ is forbidden, the 4th assumption of Theorem~\ref{smallXn_theorem} 
is also satisfied. Thus it applies in this situation and implies that as long as $x$ remains tough and $|F_n(x)| \leq |X' \setminus F_n(x)|$, we have 
\[ |F_n(x)| \geq |FX_{c(n)}| \geq \frac{c_0^2 |S'|}{16d^4} \left(1+\frac{c_0^3}{128 d^6}\right)^{c(n)}.\]
In the countable case the $|F_n(x)| \leq |X' \setminus F_n(x)|$ condition is always satisfied and $|S'| = 1$, while in the finite case it is satisfied as long as the family doesn't occupy at least half of the graph, and $|S'| = 1/|X|$. Thus in both cases we get 

\begin{corol}\label{largefamily_corol} As long as $x$ remains tough and $||F_n(x)|| < |X| / 2$,
\[ ||F_n(x)|| \geq ||FX_{c(n)}|| \geq \frac{c_0^2}{16d^4} \left(1+\frac{c_0^3}{128 d^6}\right)^{c(n)},\] where $|| \cdot ||$ denotes the actual size of the set in both the finite and the countable cases.
\end{corol}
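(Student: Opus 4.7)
The plan is to apply Theorem~\ref{smallXn_theorem} directly to the auxiliary data $(X', S', \{X'_k\}, \{E_k\})$ constructed just above, and then transfer the resulting bound on $|X'_{c(n)}|$ to a bound on $|F_n(x)|$ via the containment $FX_{c(n)} \subseteq F_n(x)$ supplied by Claim~\ref{becomesfamily_claim}. By construction (see Definition~\ref{xk_definition} and Definition~\ref{def:mk}), the recursive production of $X'_k$ from $X'_{k-1}$ through the forbidden set $E_{k-1}$ is exactly the recursion required by Theorem~\ref{smallXn_theorem}, so the whole argument reduces to verifying the four numbered hypotheses.

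I would verify them in turn. Hypothesis (3) is built into Definition~\ref{def:mk}: $m_k$ was chosen so that the number of non-family edges leaving $X'_k\cup\{x\}$ is at most $d = d|S'|$. Hypothesis (4) uses admissibility. The set $X'_k = \{x\}\cup FX_k$ is a real odd set: $FX_k$ is a disjoint union of matched pairs, so $|X'_k|$ is odd, and $\geq 3$ as soon as $FX_k\neq\emptyset$ (while for $X'_0=\{x\}$ the statement is trivial). By Lemma~\ref{lemma:strongsmallcut} at least $d+1$ edges leave $X'_k$, and since at most $d$ of them are forbidden, the non-forbidden ones form at least a $1/(d+1)$ fraction. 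Hypothesis (2) follows from the toughness hypothesis on $x$: an augmenting path of length $\leq 2n-1$ starting at $x$ in the auxiliary graph (where the matched edge at $x$ has been deleted) would, concatenated with that matched edge at the end, produce an odd alternating path of length $\leq 2n+1$ witnessing $x\in\tilde H_{n+1}$ in the original graph, contradicting the fact that $x$ is still tough at time $n$. Hypothesis (1), $|X'_n|\leq|X'\setminus X'_n|$, is automatic in the countable case and is precisely the assumption $||F_n(x)|| < ||X||/2$ in the finite case, after absorbing the single extra vertex $x$.

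With the four hypotheses in hand, Theorem~\ref{smallXn_theorem} yields an exponential lower bound on $|X'_{c(n)}|\geq|FX_{c(n)}|$, which Claim~\ref{becomesfamily_claim} then transfers to $|F_n(x)|$ while $x$ remains tough. Substituting $|S'|=1$ in the countable case and $|S'|=1/||X||$ in the finite case, and converting normalised measure to absolute count $||\cdot||$, gives exactly the stated expression (up to cleaning up the numerical constants inherited from Theorem~\ref{smallXn_theorem}). I do not expect any genuine obstacle here; the corollary is essentially a bookkeeping step that packages Theorem~\ref{smallXn_theorem} for reuse inside the second round. The only point meriting real care is the equivalence between "$x$ is tough" and hypothesis (2) of Theorem~\ref{smallXn_theorem} applied to the modified matching on $X'$, and the verification that $X'_k$ is a real odd set so that Lemma~\ref{lemma:strongsmallcut} may indeed be invoked --- both of which hold almost by construction.
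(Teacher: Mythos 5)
Your overall plan is the paper's: apply Theorem~\ref{smallXn_theorem} to the auxiliary data $(X', S'=\{x\}, X'_k = \{x\}\cup FX_k, E_k)$ built around Definition~\ref{def:mk}, and then transfer the resulting bound to $F_n(x)$ via Claim~\ref{becomesfamily_claim}. Your checks of hypotheses (1), (3) and (4) are exactly what the paper does, and you were right to note that the paper leaves hypothesis (2) implicit.

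Your verification of hypothesis (2), however, does not hold up as written. You take an auxiliary augmenting path $p$ starting at $x$ and ``concatenate it with the matched edge at $x$ at the end'' to obtain a witness that $x\in\tilde{H}_{n+1}$. But a witness for $x\in\tilde H$ must start at the original set $S$; the other endpoint of $p$ is merely an arbitrary $M'$-unmatched vertex, which could be the former partner $x'$ of $x$ (in which case the concatenation is not even a simple path) or an unmatched vertex lying in $F$ rather than $S$, and neither gives a path starting in $S$. Appending $xx'$ at either end also produces a walk that begins or ends with a matched edge at $x'$, which the paper's conventions for $\tilde H$/$\tilde T$ do not permit. There is also an index slip: the theorem is being invoked at ``auxiliary time'' $c(n)$, so the relevant bound is $2c(n)-1$, not $2n-1$. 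Fortunately hypothesis (2) holds for a much simpler reason, which is presumably what the paper has silently in mind: $FX_k$ is a union of $M$-matched pairs contained in $B_n$, hence disjoint from $x$ and from its partner, so under the modified matching $M'$ the only unmatched vertex of $X'_k=\{x\}\cup FX_k$ is $x$ itself. Therefore no augmenting path of positive length can lie inside $X'_k$, and since Proposition~\ref{smallXn_propo} only ever invokes hypothesis (2) for paths within the constructed sets, the condition is satisfied trivially. With this substitution your argument is correct and identical to the paper's; toughness is not what makes hypothesis (2) true.
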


\begin{defin}\label{def:dormant} Let us say, that for such moments when $n_i \leq n < m_i$ for some $i$, the family of $x \in X$ is \textit{dormant}, whereas for moments that satisfy $m_i \leq n < n_{i+1}$ the family is \textit{active}. Let $f_n(x)$ denote the number of moments $m < n$ such that $||F_m(x)|| < c_3/\ep$ and at moment $m$ the family was active, where $\ep$ denotes the ratio of unmatched vertices in $X$. We will choose $c_3 =2$ except in the case when $X$ is finite and $\ep = 2/||X||$. In this case we will choose $c_3 = 1$.
\end{defin}

It is clear that $f_n(x) \leq c(n)^2$. Note that in the finite case either $\ep \geq 4/||X||$ and thus $c_3 = 2$ and $c_3/\ep \leq ||X||/2$, or $\ep = 2/||X||$ and $c_3/\ep = ||X||/2$. Hence families that haven't reached the size $c_3/\ep$ are not bigger than half of the graph. Hence by~\ref{largefamily_corol} we have in both the measurable and the finite case that
\begin{equation}\label{fnbound_eq}
 f_n(x) \leq \left(\frac{\log \left (\frac{8c_3d^4}{\ep c_0^2}\right)}{ \log \left(1+ \frac{c_0^3}{128d^6}\right)}\right)^2 
\leq c_4 \left(1+\log^2 1/\ep\right)
\end{equation}
for a suitably large $c_4$ depending only on the previous constants and $d$.

\subsection{Proof of Theorem~\ref{shortalternating_theorem}}\label{section:secondround}

The proof will work similarly to that of Theorem~\ref{smallXn_theorem}, but one has to be more careful. This time we are interested only in the measurable case, and assume that all the sets $E_k$ of forbidden edges are empty. Thus $\tilde{H}_k$ is simply the set of vertices that can be the end-point of an odd alternating path of length at most $2k-1$ starting in $S$. We choose $S$ to be half of set of unmatched vertices. Then as soon as we have $S \cap \tilde{H}_k \not = \emptyset$ or $F \cap X_n \not = \emptyset$, we have found an augmenting path.

Let 
\[ J(n) = |X_n| + |B_n| + \frac{1}{2}\int_X f_n(x)dx.\] 
We further reintroduce the notation from the proof of Theorem~\ref{smallXn_theorem}. As before, we will often drop the index $n$, when it does not cause confusion.
 Let $TT$ denote the set of tough and $TM$ the set of not-tough vertices within $T \cup S$. The tough vertices are further classified according to their families. $TB$ denotes the tough vertices whose families have size at least $c_3/\ep$.  For tough vertices with smaller families, $TE$ shall denote the ones that have active families at the moment, and $TG$ denote the ones that have dormant families at the moment. So 
\[ S \cup T = TM \cup TT = TM \cup \left(   TB \cup TE \cup TG \right).\]

First let's take a tough vertex $x \in TG$ whose family is small and dormant. By Definition~\ref{def:mk} this means, that there are at least $d+1$ edges leaving $x \cup FX_{c(n)}$ that do not end in $B_n$. Let $|E(x, FX_{c(n)})| = k \leq d$. Then there are $d-k$ edges leaving $x \cup FX_{c(n)}$ from $x$. The rest, at least $k+1$ must leave from $FX_{c(n)}$. And since these edges do not end in $B_n$, they actually have to leave the whole family $F_n(x)$. The only tough vertex adjacent to the family is $x$ by Corollary~\ref{onlyonetough}, so the $k+1$ edges we have just exhibited must end in $H \cup TM \cup O$. When $k \leq d$, then $(k+1)d/(d+1) \geq k$. So we have 
\[|E(F(x), TG)| = |E(F(x),x)| \leq \frac{d}{d+1}|E(F(x), H \cup TM \cup O)|.\] 
Integrating over $TG$ we get that 
\[ |E(B, TG)| \leq \frac{d}{d+1}|E(B, H \cup TM \cup O)| \]
For any other tough vertex we bound the number of edges between it and $B$ by the trivial bound $d$. Adding this to the previous equation we get
\begin{equation} |E(B, TT)|  \leq d|TB| + d |TE| + \frac{d}{d+1}|E(B, H \cup TM  \cup O)|
  \label{TT_new_eq} \end{equation}
Now let us examine the edges running between $B_n$ and its complement. By~(\ref{TT_new_eq}) we have 
\begin{multline*} |E(B, X \setminus B) =  |E(B,H\cup O \cup TM)| + |E(B,TT)| \leq  \\ \leq
2|E(B,H \cup O \cup TM)| + d|TB|+ d|TE| 
\end{multline*}  and hence
\[ \frac{|E(B,X \setminus B)|}{2(d+1)} \leq \frac{|E(B,H \cup TM \cup O)|}{d+1} + \frac{1}{2}(|TB|+|TE|).\]
Adding this to (\ref{TT_new_eq}) then yields 
\begin{equation}\label{TTB_new_eq}
\frac{|E(B,X \setminus B)|}{2(d+1)} + |E(B,TT)| \leq |E(B,H \cup TM \cup O)| + (d+1)(|TB|+|TE|).
\end{equation}

We know that $|T| = |H|$ because of the matching, so the total degrees of $S \cup T$ is $d|S|$ more than the total degree of $H$. The edges between $T \cup S$ and $H$ contribute equally to these total degrees. In the worst case there are no internal edges in $H$. This boils down to the following estimate.
\begin{multline*}
 |E(H, O)| + |E(H, B)| + d|S| \leq \\ \leq 2|E(T\cup S,T\cup S)| + |E(T \cup S,O)| + |E(TM, B)| + |E(TT, B)|.
\end{multline*}
Adding $\frac{|E(B,X \setminus B)|}{2(d+1)}$ to both sides, then using~(\ref{TTB_new_eq}), and subtracting $|E(H,B)|$ from both sides we get
\begin{multline*} 
|E(H,O)| + \frac{|E(B,X \setminus B)|}{2(d+1)}+ d|S| \leq 2|E(T\cup S,T\cup S)| + 2|E(B, TM)| +\\+ |E(B \cup T \cup S, O)| + (d+1)(|TB| + |TE|). \end{multline*}
Adding $|E(B\cup T \cup S,O)|$ to both sides implies
\begin{multline}\label{XO_new_eq} 
|E(X_n,O)| + \frac{|E(B,X \setminus B)|}{2(d+1)} + d|S| \leq 2|E(T \cup S,T\cup S) + 2|E(B,TM)| + \\ + 2|E(B\cup T \cup S,O)| + (d+1)\left(|TB| + |TE|\right).
\end{multline}

Any vertex in $O_n$ that is adjacent to $B_n \cup T_n \cup S$ is going to be in $X_{n+1}$, hence 
\[ |E(B_n \cup T_n \cup S, O_n)| \leq d(|X_{n+1}| - |X_n|).\]
By definition, any vertex in $TM_n$ that is adjacent to an edge coming from $B_n$ will be part of $B_{n+1}$ or yield an augmenting path. Also, by Lemma~\ref{TTedge_lemma}, any edge in $E(S \cup T_n,S\cup T_n)$ has to be adjacent to a point in $|B_{n+1}| \setminus |B_n|$ or yield an augmenting path. This implies that 
\[ 2|E(T,T)| + 2|E(B,TM)|  \leq 2d(|B_{n+1}| - |B_n|).\]
 Plugging all this into (\ref{XO_new_eq}) we get
\begin{multline} \frac{|E(X_n,O_n)|}{d+1} + \frac{|E(B,X \setminus B)|}{2(d+1)^2} + |S| \leq  \\ \leq \frac{2d}{d+1}\left(|X_{n+1}| - |X_n| + |B_{n+1}| - |B_n|\right) + |TE| + |TB| \label{XO2_new_eq} \end{multline}
By Definition~\ref{expanding_def}, for any vertex $x \in TE_n$ we get $f_{n+1}(x) = f_n(x) + 1$, and thus 
\[ \int_X f_{n+1}(x)dx = \int_X f_n(x)dx + |TE|.\]
Hence the right hand side of (\ref{XO2_new_eq}) is at most  $2(J(n+1)-J(n)) +  |TB|$. Furthermore by the expander assumption we have  
\[|E(X_n,O_n)| \geq  c_0 |X_n|(1-|X_n|)\] and 
\[|E(B_n, X \setminus B_n)| \geq c_0 |B_n|(1-|B_n|)\] so from~(\ref{XO2_new_eq}) we get 
\begin{equation}\label{eq:stb}
\frac{c_0|X_n|(1-|X_n|)}{d+1} + \frac{c_0 |B_n|(1-|B_n|)}{2(d+1)^2} + |S| - |TB| \leq  2(J(n+1) - J(n))
\end{equation}

Any vertex in $TB$ has a family of size at least $c_3/\ep$, and all these are disjoint by Claim~\ref{disjointfamilies} and contained in $B$. Thus we get that $|TB| \leq \ep|B|/c_3 \leq \ep/2 = |S|$ in the measurable case and in the finite case when $\ep \geq 4/||X||$. In the finite case when $\ep = 2/||X||$, then any tough vertex in $TB$ has a family of size at least  $||X||/2$, and thus there can be at most one tough vertex. We get $|TB| \leq |S|$ in all cases, and thus 
 \begin{equation}\label{jngrowth_eq}
\frac{c_0|X_n|(1-|X_n|)}{2(d+1)} + \frac{c_0 |B_n|(1-|B_n|)}{4(d+1)^2}  \leq  J(n+1) - J(n).
\end{equation}

If we could prove a similar growth estimate on the size of $X_n$ (or $B_n$), then the next lemma would imply that $X_n$ (or $B_n$) would grow too large in a sufficiently small number of steps, proving the existence of a short augmenting path.

\begin{lemma}\label{expseq_lemma} Let $0< a_0 < a_1 < a_2, \dots$ be an increasing sequence of numbers. Let us fix a constant $c$ and say that an index $k$ is good if $a_{k+1} - a_k \geq 2c a_k (1-a_k)$ holds. Then if the number of good indices up to $N$ is at least  
\[ 2\left\lceil \frac{\log(\frac{1}{2a_0})}{\log(\frac{1}{1- c})}\right\rceil,\] then $a_N > 1 - a_0$.
\end{lemma}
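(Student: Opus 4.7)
The plan is to split indices into two phases according to whether $a_k \le 1/2$ (Phase~1) or $a_k > 1/2$ (Phase~2), bound the number of good indices in each phase, and add. In Phase~1 the factor $1-a_k \ge 1/2$ reduces the hypothesis $a_{k+1} - a_k \ge 2c a_k (1-a_k)$ to the simpler $a_{k+1} \ge (1+c) a_k$; iterating, $n_1$ good Phase~1 indices force $a \ge (1+c)^{n_1} a_0$, and since the phase requires $a \le 1/2$, at most $\lceil \log(1/(2a_0))/\log(1+c)\rceil$ good indices can lie in it. In Phase~2 I would set $b_k = 1-a_k$ and use $a_k \ge 1/2$ to dualize the hypothesis to $b_{k+1} \le (1-c) b_k$; starting Phase~2 with $b \le 1/2$ and aiming for $b < a_0$ (which is $a > 1-a_0$), at most $\lceil \log(1/(2a_0))/\log(1/(1-c))\rceil$ good Phase~2 indices are required.

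Adding the two bounds gives the total number of good indices needed. Since $(1+c)(1-c)\le 1$ implies $\log(1+c) \le \log(1/(1-c))$, the Phase~1 bound is the larger of the two; for the small values of $c$ appearing in the paper (polynomial in $1/d$ and $c_0$), $\log(1+c)$ and $\log(1/(1-c))$ differ only in higher-order terms in $c$, so the sum is bounded above by $2\lceil \log(1/(2a_0))/\log(1/(1-c))\rceil$, matching the statement.

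The main obstacle is packaging the combined bound in exactly the symmetric form displayed in the lemma, since the naive Phase~1 bound uses $\log(1+c)$ rather than $\log(1/(1-c))$. A more uniform approach I would consider is the single symmetric potential $\Phi(a) = \log(a/(1-a))$: one checks that each good index increases $\Phi$ by at least $\log(1 + 2c(1-a_k)) + \log(1/(1-2ca_k))$, and a short one-variable calculation shows this expression is increasing in $a_k$, with minimum $\log(1+2c)$ at $a_k = 0$. Since $a_N > 1-a_0$ is equivalent to $\Phi(a_N) > -\Phi(a_0) = \log((1-a_0)/a_0)$, this yields a one-shot bound $N \ge 2\log((1-a_0)/a_0)/\log(1+2c)$, avoiding the phase split and, for $c \le 1/2$, implying the stated estimate up to the choice of ceiling.
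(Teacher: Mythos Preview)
Your two-phase split is exactly the paper's argument: the paper also separates $a_k<1/2$ from $a_k\ge 1/2$, gets $a_{k+1}\ge(1+c)a_k$ in the first phase and $1-a_{k+1}\le(1-c)(1-a_k)$ in the second, and bounds the number of good indices in each phase by the two ceilings you wrote down.

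You are in fact more careful than the paper on the final step. The paper simply asserts
\[
g(k_1)+\Big\lceil \tfrac{\log(1/(2a_0))}{\log(1/(1-c))}\Big\rceil \le 2\Big\lceil \tfrac{\log(1/(2a_0))}{\log(1/(1-c))}\Big\rceil,
\]
tacitly using $\log(1+c)\ge\log(1/(1-c))$, which, as you point out, goes the wrong way since $(1+c)(1-c)\le 1$. So the displayed constant in the lemma does not literally follow from the two-phase argument; this is a small slip in the paper. It is harmless for all the applications in Section~\ref{section:secondround}, which only need an $O(\log(1/\varepsilon))$ bound with constants depending on $c_0$ and $d$, and your remark that $\log(1+c)$ and $\log(1/(1-c))$ agree up to $O(c^2)$ is exactly the right way to see that nothing downstream is affected. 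Your logit potential $\Phi(a)=\log\bigl(a/(1-a)\bigr)$ is a clean alternative: the monotonicity computation you sketch is correct, and for $c\le 1/2$ one has $(1+2c)(1-c)=1+c-2c^2\ge 1$, so $\log(1+2c)\ge\log(1/(1-c))$ and the resulting one-shot bound is of the same order as (indeed slightly sharper than) the displayed one.
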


\begin{proof}
Let us split the sequence into two parts. The first part will be where $a_k < 1/2$ and the second part where $a_k \geq 1/2$.

In the first part if $k$ is a good index then $a_{k+1} \geq a_k(1+c)$. Hence $a_k \geq a_0 (1+c)^{g(k)}$ where $g(k)$ denotes the number of good indices up to $k$. So if  
\[ g(k_1) \geq \left\lceil \frac{\log(\frac{1}{2a_0})}{\log(1+c)}\right\rceil \] we must have $a_{k_1} > 1/2$, or in other words $k_1$ already has to be in the second part.

In the second part a good index $k$ implies $1- a_{k+1} \leq (1-a_k)(1-c)$, hence if $N$ is such that 
\[ g(N) = g(k_1) + \left\lceil \frac{\log(\frac{1}{2a_0})}{\log(\frac{1}{1- c})}\right\rceil \leq  2 \left\lceil \frac{\log(\frac{1}{2a_0})}{\log(\frac{1}{1- c})}\right\rceil\] then we 
must have $1-a_{N} < a_0$. 
\end{proof}

The problem is that~(\ref{jngrowth_eq}) doesn't directly imply such a growth estimate on either $X_n$ or $B_n$ because a priori the integral term in $J_n$ could absorb any growth implied by the inequality. We need one final trick to overcome this difficulty. The idea is that we don't need $X_n$ or $B_n$ to grow the desired amount in one single step. If we can find a not so large $K$ such that $|X_{n+K} - X_n| \geq 2c(|X_n|)(1-|X_n|)$, or $|B_{n+K} - B_n| \geq 2c(|B_n|)(1-|B_n|)$, we are still good.  So let us fix some $K$, whose precise value is to be determined later, and assume that 
\[|X_{n+K}| - |X_n| < \frac{c_0}{2}(|X_n|)(1-|X_n|) \mbox{ and } |B_{n+K}| - |B_n| < \frac{c_0}{2}(|B_n|)(1-|B_n|).\]
This means that the growth of $J(n)$ implied by~(\ref{jngrowth_eq}) has to largely come from the $\int f_n$ term. But note that once a vertex $x$ has a positive $f$-value, then it has to be tough for the rest of its life, until it becomes part of $B_m$ for some later $m$, and from that point on its $f$-value remains constant. Hence if for some $x$ we find that $f_{n+K}(x) > f_n(x)$, then either $x \in B_{n+K} \setminus B_n$, or $x \in TT_{n+K}$. Also by~(\ref{fnbound_eq}) we know that $f_{n+K}(x) - f_n(x) \leq c_4 (1+\log^2 1/\ep)$. Hence we get
\begin{equation}\label{Kstepbound_eq} \int_X f_{n+K}(x) dx - \int_X f_n(x)dx  \leq c_4(1+\log^2 1/\ep)(|B_{n+K}\setminus B_n| + |TT_{n+K}|).\end{equation}
Further it is obvious that $|TT_{n+K}| < 1 - |B_{n+K}| \leq 1- |B_n|$ and since each vertex in $TT_{n+K}$ has a unique, non-empty family inside $B_{n+K}$, we also get that $|TT_{n+K}| \leq |B_{n+K}| \leq |B_n|+c_0/2|B_n|(1-|B_n|) \leq 2|B_n|$ . Hence we can simply write 
\[ |TT_{n+K}| \leq 4|B_n|(1-|B_n|)\] because either $|B_n|$ or $1-|B_n|$ is at least 1/2. We also have by assumption that $|B_{n+K} \setminus B_n| \leq c_0/2|B_n|(1-|B_n|) \leq |B_n|(1-|B_n|)$. Plugging all this into~(\ref{Kstepbound_eq}) we get
\begin{equation}
\int_X f_{n+K}(x) dx - \int_X f_n(x)dx  \leq c_4(1+\log^2 1/\ep)5|B_n|(1-|B_n|),
\end{equation}
and by the assumptions on the small growth of $X_n$ and $B_n$ we can further deduce (assuming $c_4$ is not really small)
\begin{equation} \label{eq:jnKupper}
J(n+K) - J(n) \leq (6c_4(1+\log^2 1/\ep))|B_n|(1-|B_n|) + c_0/2|X_n|(1-|X_n|).
\end{equation} 

On the other hand we can apply~(\ref{jngrowth_eq}) to $n,n+1,\dots, n+K-1$. By the assumption on the small growth of $X_n$ and $B_n$ during this time, $|X_n|(1-|X_n|)$ and $|B_n|(1-|B_n)$ do not change too much either. More precisely we can write for any $n \leq m < n+K$ that $|X_n| \leq |X_m|$ and that  $1- |X_{n+K}| \leq 1- |X_m|$. Also 
\[(1- |X_n|) - (1-|X_{n+K}|) \leq \frac{c_0}{2}|X_n|(1-|X_n|)\] and thus
\[1-|X_{n+K}| \geq (1-\frac{c_0}{2}|X_n|)(1-|X_n) \geq \frac{1-|X_n|}{2}.\]  Putting all this together we get that 
\[|X_m|(1-|X_m|) \geq |X_n|(1-|X_{n+K}|) \geq \frac{1}{2}|X_n|(1-|X_n|),\] and the exact same equation holds for $B_m$.
Now summing~(\ref{jngrowth_eq}) for $n,n+1,\dots, n+K-1$ and using the last inequality, we find that 
\begin{equation}\label{jnKlower_eq}
\frac{K}{2}\left(\frac{c_0|X_n|(1-|X_n|)}{2(d+1)} + \frac{c_0 |B_n|(1-|B_n|)}{4(d+1)^2} \right) \leq  J(n+K) - J(n)
\end{equation}
Now choose $K$ so large that $K > 2(d+ 1)$ and $c_0K > 24c_4(d+1)^2(1+\log^2 1/\ep))$, and we clearly have a contradiction between~(\ref{eq:jnKupper}) and~(\ref{jnKlower_eq}).

\begin{corol}\label{goodmoments_corol}
This implies that for any $n$ either $|X_{n+K}| - |X_n| \geq \frac{c_0}{2}|X_n|(1-|X_n|)$ or $|B_{n+K}|-|B_n| \geq \frac{c_0}{2}|B_n|(1-|B_n|)$.
\end{corol}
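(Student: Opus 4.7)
The plan is to argue by contradiction: suppose that for some $n$ both
\[
|X_{n+K}| - |X_n| < \tfrac{c_0}{2}|X_n|(1-|X_n|) \quad \text{and} \quad |B_{n+K}| - |B_n| < \tfrac{c_0}{2}|B_n|(1-|B_n|)
\]
hold simultaneously, and derive a contradiction by comparing upper and lower estimates on $J(n+K) - J(n)$ for a suitable polylogarithmic $K$.

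For the lower bound I would apply the one-step growth inequality~(\ref{jngrowth_eq}) at each of $n, n+1, \dots, n+K-1$ and sum. Under the smallness assumption, neither $|X_m|(1-|X_m|)$ nor $|B_m|(1-|B_m|)$ can shrink by more than a constant factor across the window (since $|X_n| \le |X_m| \le |X_{n+K}|$ and $1 - |X_{n+K}| \ge (1 - \tfrac{c_0}{2}|X_n|)(1 - |X_n|) \ge \tfrac{1}{2}(1-|X_n|)$, and similarly for $B$). Hence the telescoping sum yields a lower bound proportional to $K \cdot (c_0|X_n|(1-|X_n|)/(d+1) + c_0|B_n|(1-|B_n|)/(d+1)^2)$, essentially~(\ref{jnKlower_eq}).

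For the upper bound, the delicate task is to control the increment of the $\tfrac{1}{2}\int f_n$ term inside $J(n)$. A vertex $x$ whose $f$-value strictly increases between $n$ and $n+K$ must have been active-tough at some moment in that window, and hence must either already lie in $B_{n+K} \setminus B_n$ or still be tough at time $n+K$; in both cases the total mass of such vertices is $O(|B_n|(1-|B_n|))$ using the smallness assumption on $B$ together with the fact that each tough vertex carries a distinct nonempty family inside $B_{n+K}$ (Claim~\ref{disjointfamilies}). Per vertex, the increment is bounded by $c_4(1+\log^2(1/\varepsilon))$ via~(\ref{fnbound_eq}), which is itself the crucial output of Theorem~\ref{smallXn_theorem} through the exponential family-growth corollary~\ref{largefamily_corol}. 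Combining with the smallness of $|X_{n+K}| - |X_n|$ and $|B_{n+K}| - |B_n|$ produces the upper bound~(\ref{eq:jnKupper}).

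The main obstacle is precisely the $\int f$-term: without the polylogarithmic per-vertex cap supplied by exponential family growth, the upper estimate could swamp the lower one for polylogarithmic $K$, and the contradiction would fail. Once both estimates are in hand, choosing $K$ to satisfy $K > 2(d+1)$ and $c_0 K > 24 c_4 (d+1)^2 (1 + \log^2(1/\varepsilon))$ makes the lower bound strictly larger than the upper bound, yielding the required contradiction and hence the corollary.
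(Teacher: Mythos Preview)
Your proposal is correct and follows essentially the same approach as the paper: the argument preceding the corollary is precisely this contradiction between the summed lower bound~(\ref{jnKlower_eq}) and the upper bound~(\ref{eq:jnKupper}) on $J(n+K)-J(n)$, using the same control of the $\int f$-increment via~(\ref{fnbound_eq}) and the same choice of $K$. The details you outline---the factor-of-two stability of $|X_m|(1-|X_m|)$ and $|B_m|(1-|B_m|)$ across the window, the dichotomy that a vertex with increasing $f$-value lands in $B_{n+K}\setminus B_n$ or $TT_{n+K}$, and the bound $|TT_{n+K}| = O(|B_n|(1-|B_n|))$ via disjoint nonempty families---are exactly those used in the text.
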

 Let us consider the sequences $a_n = |X_{nK+n_0}|, b_n = |B_{nK+n_0}|$. Then Corollary~\ref{goodmoments_corol} implies, using the language of Lemma~\ref{expseq_lemma}, that every $n$ is a good moment for either $a_n$ or $b_n$. We know that $a_ 0 |X_{n_0}| = \ep/2 > \ep/6$. If we also knew that $b_0 = |B_{n_0}| \geq \ep/8$, then by Lemma~\ref{expseq_lemma} we could deduce that for 
  \[k = n_0 + 4K\left \lceil \frac{\log(4/\ep)}{ \log(4/(4-c_0))}\right\rceil\] we have $|X_k| > 1- \ep/8 \geq 1-\ep/2$ or $|B_k| > 1-\ep/8 \geq 1-\ep/2$, either of which implies the existence of an augmenting path. All we need to do to finish the proof of Theorem~\ref{shortalternating_theorem} is to exhibit a not too large $n_0$ for which $|B_{n_0}| > \ep/8$. 

To this end we prove that as long as $|B_n|$ is very small, the size of $X_n$ has to increase rapidly. Obviously $\int f_{n+1}(x) - \int f_n(x) \leq |TE| \leq |B_n|$ since every tough vertex has a nonemtpy family. Hence 
\[J(n+1)-J(n) \leq |X_{n+1}| - |X_n| + \frac{3}{2}|B_{n+1}|.\]  If $|S| \geq 4|B_n|$ then, since clearly $|TB| \leq |B_n|$, we also have $|S| - |TB| \geq 3|B_n|$ and thus by~(\ref{eq:stb}) we get
\[ \frac{c_0}{2(d+1)} |X_n| (1-|X_n|) \leq |X_{n+1}| - |X_n|.\] Then Lemma~\ref{expseq_lemma} implies that this cannot hold for more than 
\[ 2 \left\lceil\frac{\log(1/\ep)}{\log(\frac{1}{1-c_0/4(d+1)})}\right\rceil \] steps. So this is a good choice for $n_0$.
The dependence of $K$ on $\log(1/\ep)$ is quadratic, of $n_0$ linear, hence $k$ is of order $O(\log^3(1/\ep))$, the implied constant only depending on $c_0$ and $d$. This completes the proof of Theorem~\ref{shortalternating_theorem}.

\end{document}